\newtheorem{teo}{Theorem}
\newtheorem{cor}[teo]{Corollary}
\newtheorem{prop}[teo]{Proposition}
 \theoremstyle{definition}
\newtheorem{dhef}[teo]{Definition} \theoremstyle{remark}
\newtheorem{rk}[teo]{Remark}
\long\def\elimina#1{} 
\def\neweq#1{\begin{equation}\label{#1}}
\def\endeq{\end{equation}}
\DeclareMathOperator{\cotan}{cotan}
\def\integ{\int_\Omega}
\def\eq#1{(\ref{#1})}
\def\R{\mbox{${\rm I}\!{\rm R}$}}
\def\Wp{W_0^{1,p}}
\def\Web{\mathcal{W}_p}
\def\eps{\varepsilon}
\def\ou{{u_p}}
\def\C{\mathcal{C}}
\def\S{\mathcal{S}}
\def\J{J_p}
\def\Om{\Omega}
\def\P{\mathcal{P}}
\def\Wp{W_0^{1,p}}
\def\t{\tau_p}
\def\wt{w_p}
\title{Sharp bounds for the $p$-torsion of convex planar domains}
\author{Ilaria FRAGAL\`A - Filippo GAZZOLA - Jimmy LAMBOLEY}
\date{}
\begin{document}
\baselineskip14pt \maketitle

\begin{abstract}
{\rm We obtain some sharp estimates for the $p$-torsion of convex
planar domains in terms of their area, perimeter, and inradius. The approach we
adopt relies on the use of web functions ({\it i.e.} functions
depending only on the distance from the boundary), and on the
behaviour of the inner parallel sets of convex polygons. As an
application of our isoperimetric inequalities,
we consider the shape optimization problem which consists in
maximizing the $p$-torsion among polygons having a given number of
vertices and a given area. A long-standing conjecture by
P\'olya-Szeg\"o states that the solution is the regular polygon. We
show that such conjecture is true within the subclass of polygons
for which a suitable notion of ``asymmetry measure'' exceeds a
critical threshold.}
\end{abstract}

\noindent {\small {\it 2000MSC\ }: 49K30, 52A10, 49Q10.}

\noindent {\small {\it Keywords} : isoperimetric inequalities, shape optimization, web functions, convex shapes.}

\section{Introduction}\label{intro}

Let $\Omega\subset\R^2$ be an open bounded domain and let $p \in (1,+ \infty)$. Consider the boundary value problem
\begin{equation}\label{euler}
\left\{\begin{array}{ll} -\Delta_p u=1\quad & \mbox{in }\Omega\\
u=0\quad & \mbox{on }\partial\Omega\, ,
\end{array}\right.
\end{equation}
where $\Delta_pu={\rm div}(|\nabla u|^{p-2}\nabla u)$ denotes the
$p$-Laplacian. The {\it $p$-torsion} of $\Omega$ is defined by
\begin{equation}\label{defi} \t (\Omega):= \integ|\nabla\ou|^p = \int_\Omega u_p\ ,\end{equation}
being $u _p$ the unique solution to (\ref{euler}) in $\Wp(\Omega)$. Notice that the second equality in
(\ref{defi}) is obtained by testing  (\ref{euler}) by $u_p$ and integrating by parts.
Since (\ref{euler}) is the Euler-Lagrange equation of the variational problem
\begin{equation}\label{energy}
\min_{u\in \Wp(\Omega)}\J(u)\,, \quad \hbox{ where } \J(u)
=\int_{\Omega} \Big ( \frac{1}{p} |\nabla u| ^p - u \Big ) \, ,
\end{equation}
there holds
$$ \t(\Omega) = \frac{p}{1-p} \min_{u\in \Wp(\Omega)}\J(u)\, .$$
A further characterization of the $p$-torsion  is provided by the
equality $\t(\Omega) = S(\Omega)^ {1/(p-1)}$, where $S(\Omega)$ is
the best constant for the Sobolev inequality $\|u\|_{L^1(\Omega)} ^p
\leq S(\Omega) \| \nabla u \| _{L^p(\Omega)}  ^p$ on
$\Wp(\Omega)$.\par
The purpose of this paper is to provide some sharp bounds for $\t(\Omega)$, holding for a convex planar domain $\Omega$, in terms
of its area, perimeter, and inradius (in the sequel denoted respectively by
$|\Omega|$, $|\partial \Omega|$, and $R_\Omega$). The original motivation for
studying this kind of shape optimization problem draws its origins
in the following long-standing conjecture by P\'olya and Szeg\"o:
\begin{equation}\label{PSconj}
\hbox{{\it  Among polygons with a given area and  $N$ vertices,
the regular $N$-gon maximizes $\t$} \, .}
\end{equation}
A similar conjecture is stated by the same Authors also for the
principal frequency and for the logarithmic capacity, see
\cite{posz}. For $N =3$ and $N= 4$ these conjectures were proved by
P\'olya and Szeg\"o themselves \cite[p.\ 158]{posz}. For $N \geq 5$,
to the best of our knowledge, the unique solved case is the one of
logarithmic capacity, see the beautiful paper \cite{sz} by Solynin
and Zalgaller; the cases of torsion and principal frequency are
currently open. In fact let us remind that, for $N \geq 5$, the
classical tool of Steiner symmetrization fails because it may
increase the number of sides, see \cite[Section 3.3]{h}.\par
The approach we adopt in order to provide upper and lower bounds for the $p$-torsion in terms of
geometric quantities, is based on the idea of considering a proper subspace $\Web(\Omega)$ of $\Wp(\Omega)$
and to address the minimization problem for the functional $\J$ on
$\Web(\Omega)$. More precisely, we consider the subspace of functions depending only on the distance
$d(x)={\rm dist}(x,\partial\Omega)$ from the boundary:
$$\Web(\Omega)=\{u\in \Wp(\Omega)\ : \  u(x)=u(d(x))\}\ .$$
Functions in $\Web(\Omega)$ have the same level lines as $d$, namely
the boundaries of the so-called {\em inner parallel sets}, $\Omega
_t := \{ x \in \Omega \, :\, d(x) > t \}$, which were first used in
variational problems by P\'olya and Szeg\"o \cite[Section 1.29]{posz}.
Later, in \cite{g}, the elements of $\Web (\Omega)$ were called {\em web
functions},  because in case of planar polygons the level lines of
$d$ recall the pattern of a spider web. We refer to \cite{cg,cg1}
for some estimates on the minimizing properties of these functions,
and to the subsequent papers \cite{cfg,cfg1} for their application
in the study of the generalized torsion problem. Actually, the
papers \cite{cfg,cfg1} deal with the problem of estimating how
efficiently $\t (\Omega)$ can be approximated by the {\it web
p-torsion},  defined as
$$\wt (\Omega) := \frac{p}{1-p}\min_{u\in\Web(\Omega)}\J(u)\ .$$
While the value of $\t(\Omega)$ is in general not known (because the
solution to problem (\ref{euler}) cannot be determined except for
some special geometries of $\Omega$), the value of $\wt
(\Omega)$ admits the following explicit expression in terms of the
parallel sets $\Omega _t$:
\begin{equation}\label{ice}
\wt(\Omega) = \int_0 ^ {R_\Omega} \frac{|\Omega _t| ^ q}{|\partial
\Omega _t| ^ {q-1}} \, dt\ ,
\end{equation}
where $q= \frac{p}{p-1}$ is the conjugate exponent
of $p$, and $R_\Omega$ is the inradius of $\Omega$ (see \cite{cfg1}).

Clearly, since $\Web (\Omega) \subset \Wp (\Omega)$, $\wt(\Omega)$
bounds $\t(\Omega)$ from below. On the other hand, when $\Omega$ is
convex, $\t(\Omega)$ can be bounded from above by a constant
multiple of $\wt (\Omega)$, for some constant which tends to $1$ as
$p\to+\infty$. In fact, in \cite{cfg1} it is proved that, for any
$p\in(1, +\infty)$, the following estimates hold and are sharp:
\begin{equation}\label{proven}
\forall\; \Om\in \C, \quad \frac{q+1}{2^q}<\frac{\wt(\Om)}{\t(\Om)}\leq 1
\end{equation}
where $\C$ denotes the class of planar bounded convex domains;
moreover the right inequality holds as an equality if and only if
$\Omega$ is a disk. Note that, if $p\to+\infty$, then $q\to1$ and
the constant in the left hand side of \eq{proven} tends to $1$.\par
In this paper, we prove some geometric estimates for  $\t(\Omega)$
in the class $\C$, which have some implications in the conjecture
(\ref{PSconj}). More precisely, we consider the following shape functionals:
\neweq{shapefunct}
\Omega \mapsto \frac{\t(\Om)|\partial\Om|^q}{|\Om|^{q+1}}\quad\mbox{and}\quad
\Omega \mapsto \frac{\t(\Om)}{R_{\Om}^{q}|\Om|}\ .
\endeq
Let us remark that the above quotients are invariant under dilations and that convex sets which agree
up to rigid motions (translations and rotations) are systematically identified throughout the paper.

Our main results are Theorems \ref{estimate2} and \ref{new}, which give sharp bounds for the functionals
\eq{shapefunct} when $\Omega$ varies in $\C$. We also exhibit minimizing and maximizing sequences.
These bounds are obtained by combining sharp bounds for the web $p$-torsion (see Theorem \ref{estimate} and the second part
of Theorem \ref{new}) with \eq{proven}. As a consequence of our results we obtain the validity of some weak forms of
P\'olya-Szeg\"o conjecture (\ref{PSconj}). On the class $\P $ of
convex polygons we introduce a sort of ``asymmetry measure'' such as
$$\forall \Om\in\P, \quad \gamma  (\Omega) :=  \frac{| \partial \Omega| }{| \partial \Omega ^\circledast|} \in [1, + \infty)\ ,$$
where $\Omega ^ \circledast$ denotes the regular polygon with the
same area and the same number of vertices as $\Omega$.
Then, if the $p$-torsion $\tau_p(\Omega)$ is replaced by the web
$p$-torsion $w_p(\Omega)$, (\ref{PSconj}) holds in the following refined form:
\begin{equation}\label{PSconjweb}
\forall \, \Omega \in \P \, , \quad \wt (\Omega) \leq \gamma
(\Omega) ^ {-q} \wt (\Omega  ^\circledast) \ .
\end{equation}
Consequently, on the class $\P _N$ of convex polygons with $N$
vertices, conjecture (\ref{PSconj}) holds true for those $\Omega$
which are sufficiently ``far'' from $\Omega ^\circledast$, meaning
that $\gamma (\Omega)$ exceeds a threshold depending on $N$ and $p$:
\begin{equation}\label{PSconjnonweb}
\forall \, \Omega \in \P _N\, : \, \gamma (\Omega) \geq \Gamma_{N, p} ,\quad \t (\Omega) <  \t (\Omega ^ \circledast) \ .
\end{equation}
The value of the threshold $\Gamma_{N,p}$ can be explicitly characterized (see Corollary \ref{cor2}) and
tends to $1$ as $p \to + \infty$. \par
The paper is organized as follows. Section \ref{main} contains the
statement of our results, which are proved in Section \ref{proofs}
after giving in Section \ref{geometric} some preliminary material of
geometric nature. Section \ref{open} is devoted to some related open
questions and perspectives.

\section{Results}\label{main}

We introduce the following classes of convex planar domains:\par\smallskip
$\C$ = the class of bounded convex domains in $\R^2$;\par\smallskip
$\C_o$ = the subclass of $\C$ given by tangential bodies to a disk;\par\smallskip
$\P$ = the class of convex polygons;\par\smallskip
$\P_N$ = the class of convex polygons having $N$ vertices ($N\geq3$).\par \smallskip

Tangential bodies to a disk are domains $\Omega \in \C$ such that, for some disk $D$, through each
point of $\partial \Omega$ there exists a tangent line to $\Omega$
which is also tangent to $D$. Domains in $\P\cap\C_o$ are circumscribed polygons, whereas domains in
$\C_o\setminus\P$ can be obtained by removing from a circumscribed
polygon some connected components of the complement (in the polygon
itself) of the inscribed disk. In particular, the disk itself
belongs to $\C_o$.\par
Our first results are the following sharp bounds for the $p$-torsion of
convex planar domains. We recall that, for any given $p\in (1, +
\infty)$, $q:= \frac{p}{p-1}$ denotes its conjugate exponent.

\begin{teo}\label{estimate2}
For any $p \in (1, + \infty)$, it holds
\begin{equation}\label{eq:isoDp}
\forall \Om\in\C,\quad
\frac{1}{q+1}<\frac{\t(\Om)|\partial\Om|^q}{|\Om|^{q+1}}<\frac{2^{q+1}}{(q+2)(q+1)}\ .
\end{equation}
Moreover,\par
$\bullet$ the left inequality holds asymptotically with equality sign for any sequence of thinning rectangles;\par
$\bullet$ the right inequality holds asymptotically with equality sign for any sequence of thinning isosceles triangles.
\end{teo}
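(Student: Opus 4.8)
The plan is to reduce the bounds for $\t$ to sharp bounds for the web $p$-torsion $\wt$ and feed these into the comparison \eqref{proven}. Rewriting \eqref{proven} as $\wt(\Om)\le\t(\Om)<\frac{2^q}{q+1}\,\wt(\Om)$, it suffices to prove the sharp web estimate
\begin{equation*}
\frac{1}{q+1}<\frac{\wt(\Om)\,|\partial\Om|^q}{|\Om|^{q+1}}\le\frac{2}{q+2}\qquad(\Om\in\C),
\end{equation*}
the upper value being attained exactly on the tangential bodies $\C_o$. Indeed, multiplying the left web inequality by $\wt\le\t$ gives the left inequality in \eqref{eq:isoDp}, strict because the web one is; multiplying the right web inequality by $\t<\frac{2^q}{q+1}\wt$ and using $\frac{2^q}{q+1}\cdot\frac{2}{q+2}=\frac{2^{q+1}}{(q+2)(q+1)}$ gives the right inequality in \eqref{eq:isoDp}, strict because the left inequality in \eqref{proven} is.

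To prove the web estimate I would start from the explicit formula \eqref{ice} and change variables via $a=|\Om_t|$. Since $\frac{d}{dt}|\Om_t|=-|\partial\Om_t|$, the map $t\mapsto a$ is a decreasing bijection of $[0,\Wo]$ onto $[0,|\Om|]$, and \eqref{ice} becomes
\begin{equation*}
\wt(\Om)=\int_0^{|\Om|}\Big(\frac{a}{\ell(a)}\Big)^q\,da,\qquad \ell(a):=|\partial\Om_t|\ \text{ for }\ |\Om_t|=a,
\end{equation*}
where $\ell$ is non-decreasing with $\ell(|\Om|)=|\partial\Om|$. The lower bound is then immediate: from $\ell(a)\le|\partial\Om|$ one gets $\wt(\Om)\ge|\partial\Om|^{-q}\int_0^{|\Om|}a^q\,da=\frac{|\Om|^{q+1}}{(q+1)|\partial\Om|^q}$, with equality only in the degenerate limit where $|\partial\Om_t|$ is constant.

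For the upper bound the crucial ingredient is the reverse, scale-matched estimate $\ell(a)\ge|\partial\Om|\,\sqrt{a/|\Om|}$, equivalently the monotonicity of the isoperimetric ratio of the inner parallel sets,
\begin{equation*}
\frac{|\partial\Om_t|^2}{|\Om_t|}\ \ge\ \frac{|\partial\Om|^2}{|\Om|}\qquad(0\le t\le\Wo);
\end{equation*}
granting it, $\big(a/\ell(a)\big)^q\le|\Om|^{q/2}|\partial\Om|^{-q}a^{q/2}$ and integration yields exactly $\wt(\Om)\le\frac{2}{q+2}\,\frac{|\Om|^{q+1}}{|\partial\Om|^q}$, with equality iff the ratio is constant, i.e. iff $\Om\in\C_o$. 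This monotonicity is the main obstacle. Its differential form is $-\frac{d}{dt}|\partial\Om_t|\le\frac{|\partial\Om_t|^2}{2|\Om_t|}$, i.e. the same inequality for every inner parallel set in place of $\Om$; I would establish it in the geometric preliminaries by treating convex polygons first, where $|\partial\Om_t|$ is piecewise affine with right slope $-2\sum_i\cotan(\theta_i/2)$ over the current vertex angles $\theta_i$, so that the statement reduces to the elementary polygonal isoperimetric inequality $2\sum_i\cotan(\theta_i/2)\le\frac{|\partial\Om|^2}{2|\Om|}$ (an equality precisely on circumscribed polygons). Integrating $\frac{d}{dt}\log\frac{|\partial\Om_t|^2}{|\Om_t|}\ge0$ gives the monotonicity for polygons, and the general convex case follows by approximation, since both $\wt$ (through \eqref{ice}) and the quotient $|\partial\Om|^q/|\Om|^{q+1}$ are continuous under Hausdorff convergence of convex bodies. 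A direct computation from \eqref{ice} confirms that every $\Om\in\C_o$ realizes $\frac{\wt(\Om)|\partial\Om|^q}{|\Om|^{q+1}}=\frac{2}{q+2}$.

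It remains to verify the asymptotic sharpness in \eqref{eq:isoDp}. Along a sequence of thinning rectangles $|\partial\Om_t|$ is asymptotically constant, so $\ell\equiv|\partial\Om|$ in the limit and the web quotient tends to $\frac{1}{q+1}$; moreover $\wt/\t\to1$ there, because the one-dimensional transverse profile (suitably cut off near the short sides) is an admissible competitor whose energy matches $\wt$ to leading order, so that $\frac{\t(\Om)|\partial\Om|^q}{|\Om|^{q+1}}\to\frac{1}{q+1}$. Along a sequence of thinning isosceles triangles, instead, every triangle is a tangential body, hence $\frac{\wt(\Om)|\partial\Om|^q}{|\Om|^{q+1}}=\frac{2}{q+2}$ identically; combining this with the fact that the left inequality in \eqref{proven} is asymptotically saturated by thinning triangles (so that $\t/\wt\to\frac{2^q}{q+1}$, which can be checked by the analogous thin-domain expansion or read off from \cite{cfg1}) yields $\frac{\t(\Om)|\partial\Om|^q}{|\Om|^{q+1}}\to\frac{2^{q+1}}{(q+2)(q+1)}$, as claimed.
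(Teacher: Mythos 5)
Your overall reduction is the same as the paper's: both inequalities in \eqref{eq:isoDp} are obtained by combining the sharp web estimate \eqref{eq:isoNp} with \eqref{proven}, and your bookkeeping of which factor supplies the strictness is correct (the left inequality inherits it from the strict web lower bound, the right one from the strict inequality in \eqref{proven}). Where you genuinely diverge is in the proof of the web estimate itself. The paper proves Theorem \ref{estimate} by a first-variation computation along inner parallel sets, a reduction to polygonal stadiums, and the explicit evaluation \eqref{FF}; you instead substitute $a=|\Om_t|$ in \eqref{ice} and reduce everything to the single monotonicity statement $|\partial\Om_t|^2/|\Om_t|\ge|\partial\Om|^2/|\Om|$, whose differential form for polygons is exactly the isoperimetric inequality \eqref{eq:iso} applied to $\Om_t$. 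I checked both integrations: $\ell(a)\le|\partial\Om|$ gives $\wt(\Om)\ge\frac{|\Om|^{q+1}}{(q+1)|\partial\Om|^q}$ and $\ell(a)\ge|\partial\Om|\sqrt{a/|\Om|}$ gives $\wt(\Om)\le\frac{2|\Om|^{q+1}}{(q+2)|\partial\Om|^q}$, as claimed; moreover your lower bound is automatically strict because $|\partial\Om_t|\le|\partial\Om|-2\pi t<|\partial\Om|$ for $t>0$, which is cleaner than the paper's Step 5 (there a separate approximation argument is needed to upgrade $\ge$ to $>$). This is a legitimately shorter route, at the price of justifying the monotonicity lemma for general convex bodies by polygonal approximation, which is routine.

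The genuine gap is in the sharpness of the left inequality. To show $\t(\Om^\ell)/\wt(\Om^\ell)\to1$ on thinning rectangles you must bound $\t(\Om^\ell)$ from \emph{above}, and your proposed tool --- a cut-off one-dimensional profile used ``as an admissible competitor'' in the variational problem \eqref{energy} --- points in the wrong direction. Since $\t(\Om)=\frac{p}{1-p}\min\J=-q\min\J$ and a competitor $u$ only gives $\min\J\le\J(u)$, every test function yields a \emph{lower} bound $\t(\Om^\ell)\ge-q\J(u)$, which you already have for free from $\t\ge\wt$; no choice of competitor can produce the upper bound you need. The paper uses the transverse profile $u_\infty(x,y)=\frac{p-1}{p}(1-|y|^{p/(p-1)})$ not as a competitor but as a supersolution: it satisfies $-\Delta_p u_\infty=1$ in $\Om^\ell$ and $u_\infty\ge0$ on $\partial\Om^\ell$ (no cut-off), so the comparison principle gives $u_\ell\le u_\infty$ pointwise and hence $\t(\Om^\ell)=\int u_\ell\le\int u_\infty=\frac{2\ell}{q+1}$, which is exactly the missing estimate. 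Your triangle asymptotics are fine: triangles are circumscribed, so the web quotient equals $2/(q+2)$ exactly, and $\wt/\t\to(q+1)/2^q$ is taken from \cite{cfg1}, precisely as in the paper.
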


By sequence of thinning rectangles or triangles, we mean that the
ratio between their minimal width and diameter tends to 0.
We point out that, in the particular case when $p=2$, the statement
of Theorem \ref{estimate2} is already known. Indeed, the left
inequality  in (\ref{eq:isoDp}) holds true for any simply connected
set in $\R^2$ as discovered by P\'olya \cite{polya}; the right
inequality in (\ref{eq:isoDp}) for convex sets is due to Makai
\cite{makai}, though its method of proof, which is different from
ours, does not allow to obtain the {\it strict} inequality.

Our approach to prove Theorem \ref{estimate2} employs as a major
ingredient the following sharp estimates for the web $p$-torsion of
convex domains, which may have their own interest.

\begin{teo}\label{estimate}
For any $p\in(1,+\infty)$, it holds
\begin{equation}\label{eq:isoNp}
\forall \Om\in\C,\quad
\frac{1}{q+1}<\frac{\wt(\Om)|\partial\Om|^q}{|\Om|^{q+1}}\leq\frac{2}{q+2}\ .
\end{equation}
Moreover,\par
$\bullet$ the left inequality holds asymptotically with equality sign for any sequence of thinning rectangles;\par
$\bullet$ the right inequality holds with equality sign for $\Omega \in\C_o$.
\end{teo}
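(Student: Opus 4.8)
The starting point is the explicit formula \eq{ice}. Writing $A(t):=|\Om_t|$ and $P(t):=|\partial\Om_t|$, it reads $\wt(\Om)=\int_0^{R_\Om}A(t)^q\,P(t)^{1-q}\,dt$, and the whole argument rests on three facts about the inner parallel sets of a convex body, which I expect to be available from Section~\ref{geometric}: the coarea identity $A'(t)=-P(t)$; the strict monotonicity of $P$ (a consequence of $\Om_t\oplus tB\subseteq\Om$, which gives $P(t)+2\pi t\le P(0)$); and the concavity of $t\mapsto A(t)^{1/2}$. The last fact is Brunn--Minkowski applied to the inclusion $(1-\lambda)\Om_{t_0}+\lambda\Om_{t_1}\subseteq\Om_{(1-\lambda)t_0+\lambda t_1}$, itself immediate from the fact that $x+tB\subseteq\Om$ is stable under convex combinations in $(x,t)$.

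For the lower bound I would engineer an integration by parts that exposes the constant $\tfrac1{q+1}$. Differentiating $\Phi(t):=A(t)^{q+1}P(t)^{-q}$ and inserting $A'=-P$ yields
\[
A^q P^{1-q}=-\tfrac1{q+1}\,\Phi'-\tfrac{q}{q+1}\,A^{q+1}P'\,P^{-q-1}.
\]
Integrating over $[0,R_\Om]$, the endpoint contribution at $R_\Om$ vanishes because $A(R_\Om)=0$ forces $A^{q+1}/P^q\to0$, so that
\[
\wt(\Om)=\tfrac1{q+1}\,\frac{|\Om|^{q+1}}{|\partial\Om|^q}+\tfrac{q}{q+1}\int_0^{R_\Om}A^{q+1}\,(-P')\,P^{-q-1}\,dt.
\]
Since $-P'>0$ the correction is strictly positive, giving the strict inequality; on a thinning rectangle $P$ is almost constant and this correction is negligible against the leading term, so the bound is asymptotically sharp.

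The upper bound uses concavity. With $h:=A^{1/2}$ (concave, decreasing, $h(R_\Om)=0$) one has $P=-2hh'$, and substituting into \eq{ice} reduces the claim $\wt(\Om)|\partial\Om|^q|\Om|^{-q-1}\le\tfrac2{q+2}$ to
\[
(-h'(0))^q\int_0^{R_\Om}\frac{h^{q+1}}{(-h')^{q-1}}\,dt\le\frac{h(0)^{q+2}}{q+2}.
\]
Because $h$ is concave, $-h'$ is non-decreasing, so $(-h'(t))^{q-1}\ge(-h'(0))^{q-1}$ and the denominator may be replaced by the constant $(-h'(0))^{q-1}$; it then remains to bound $\int_0^{R_\Om}h^{q+1}$, for which I use the tangent estimate $h(t)\le h(0)-(-h'(0))\,t$ together with $-h'(0)\le h(0)/R_\Om$ (forced by $h\ge0$, $h(R_\Om)=0$ and concavity) and an elementary integration. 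Both inequalities are equalities exactly when $h$ is affine, i.e. when every $\Om_t$ is homothetic to $\Om$; by the characterization of tangential bodies this means $\Om\in\C_o$, and there the value $\tfrac2{q+2}$ is confirmed by a direct computation using $|\Om|=\tfrac12|\partial\Om|R_\Om$ and $|\Om_t|=|\Om|((R_\Om-t)/R_\Om)^2$.

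The analytic manipulations are short; the real difficulty lies in securing the convex-geometric inputs with adequate regularity --- especially the concavity of $A^{1/2}$ and the identification of its equality case with $\C_o$ --- and in justifying the integration by parts when $P$ is merely Lipschitz (piecewise affine for polygons, with slope jumps each time an edge of $\Om_t$ collapses).
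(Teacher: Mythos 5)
Your argument is correct, and it takes a genuinely different route from the paper's. The paper proves \eqref{eq:isoNp} by a flow-type argument: it computes the first-order variation of the quotient along the inner parallel sets via the Steiner formulae for polygons, shows that a violation of either inequality would propagate along this flow until $\Om_t$ becomes a polygonal stadium, computes the quotient explicitly for stadiums (reducing to a one-parameter inequality in $x=2R_P\ell/|P|$), and finally passes to general convex bodies by density, with a separate perturbation argument to recover strictness of the left inequality. You instead work directly on an arbitrary $\Om\in\C$ through the profile functions $A(t)=|\Om_t|$ and $P(t)=|\partial\Om_t|$: the lower bound comes from integrating by parts against $A^{q+1}P^{-q}$, which exhibits $\tfrac{1}{q+1}|\Om|^{q+1}|\partial\Om|^{-q}$ plus the remainder $\tfrac{q}{q+1}\int_0^{R_\Om} A^{q+1}(-P')P^{-q-1}$, and the upper bound from the Brunn--Minkowski concavity of $h=A^{1/2}$ combined with the tangent-line estimate at $t=0$. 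I checked the steps and they hold: the boundary term at $R_\Om$ vanishes since $A\le (R_\Om-t)P$; the remainder is strictly positive because $t\mapsto P(t)$ is concave and satisfies $P(t)\le P(0)-2\pi t$ (the paper's \eqref{macrosteiner}), whence $-P'\ge 2\pi$ a.e.; and your chain of inequalities for the upper bound degenerates to equalities exactly when $h$ is affine with $h(R_\Om)=0$, i.e.\ when $2|\Om|=R_\Om|\partial\Om|$, which is the classical characterization of tangential bodies, confirmed by the direct computation on $\C_o$. Your route is shorter, avoids polygonal approximation entirely, yields the strictness of the left inequality for free (the paper needs an extra argument for this), and even identifies $\C_o$ as the \emph{full} equality case on the right. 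What it does not produce is the explicit stadium formula $F(\ell)$, which the paper reuses in the proof of Theorem \ref{new} and which makes the extremizing sequences transparent. The regularity caveats you flag are real but all resolved by the concavity of $t\mapsto|\partial\Om_t|$ (quoted by the paper from Bonnesen--Fenchel), which gives local Lipschitz continuity of $P$, validity of the integration by parts, and $-P'\ge 2\pi$ almost everywhere.
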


Let us now discuss the implications of the above results in the shape optimization problem which consists in
maximizing $\t$ in the class of convex polygons with a given area and a given number of vertices:
\begin{equation}\label{probl}
\max \Big \{ \t (\Omega) \ : \ \Omega \in {\mathcal P} _N \, ,\
|\Omega | = m \Big \}\,.
\end{equation}

We recall that, for any $\Omega \in \P$, $\Omega ^ \circledast$
denotes the regular polygon with the same area and the same number
of vertices as $\Omega$. Moreover, we set
$$\forall \Om\in\P, \quad \gamma  (\Omega) :=  \frac{| \partial \Omega| }{| \partial \Omega ^\circledast|} \, ;$$
notice that by the isoperimetric inequality for polygons (see Proposition \ref{prop:isop}), $\gamma(\Om)\in[1,+\infty)$ and $\gamma(\Omega)>1$ if $\Omega \neq \Omega ^\circledast$.
With this notation, it is  straightforward to deduce from Theorem \ref{estimate} the following

\begin{cor}\label{cor1} The regular polygon is the unique maximizer of $\wt$ over polygons in $\P$ with a given area and a given number of vertices.
More precisely, the following refined isoperimetric inequality holds:
\begin{equation}\label{PSconjweb'}
\forall \, \Omega \in \P \, , \quad \wt (\Omega) \leq \gamma
(\Omega) ^ {-q} \wt (\Omega  ^\circledast) \ .
\end{equation}
\end{cor}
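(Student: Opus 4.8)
The plan is to derive Corollary \ref{cor1} directly from the right-hand inequality in Theorem \ref{estimate}, exploiting the scaling invariance of the relevant shape functional together with the isoperimetric inequality for polygons. The key observation is that the quantity $\wt(\Om)|\partial\Om|^q/|\Om|^{q+1}$ is a scale-invariant shape functional, and Theorem \ref{estimate} tells us it is bounded above by $2/(q+2)$, with equality attained precisely on tangential bodies $\C_o$ — in particular on any regular polygon, since every regular polygon is circumscribed about its inscribed disk and hence belongs to $\P\cap\C_o$.

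First I would fix $\Om\in\P$ and let $\Om^\circledast$ be the regular polygon with the same area and number of vertices. Applying Theorem \ref{estimate} to $\Om$ gives the upper bound
\begin{equation}\label{step1}
\wt(\Om)\leq\frac{2}{q+2}\cdot\frac{|\Om|^{q+1}}{|\partial\Om|^q}\, .
\end{equation}
Since $\Om^\circledast\in\C_o$, the right inequality in \eq{eq:isoNp} holds with equality for $\Om^\circledast$, so that
\begin{equation}\label{step2}
\wt(\Om^\circledast)=\frac{2}{q+2}\cdot\frac{|\Om^\circledast|^{q+1}}{|\partial\Om^\circledast|^q}\, .
\end{equation}
Because $\Om$ and $\Om^\circledast$ share the same area, $|\Om|=|\Om^\circledast|$, dividing \eq{step1} by \eq{step2} eliminates both the constant $2/(q+2)$ and the common area factor $|\Om|^{q+1}$, leaving
$$
\frac{\wt(\Om)}{\wt(\Om^\circledast)}\leq\frac{|\partial\Om^\circledast|^q}{|\partial\Om|^q}=\gamma(\Om)^{-q}\, ,
$$
which is exactly \eq{PSconjweb'}.

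The uniqueness assertion then follows from the isoperimetric inequality for polygons recalled just before the statement: among polygons of fixed area and fixed number of vertices, $\Om^\circledast$ uniquely minimizes the perimeter, so $\gamma(\Om)\geq1$ with $\gamma(\Om)=1$ if and only if $\Om=\Om^\circledast$. Hence $\gamma(\Om)^{-q}\leq1$, giving $\wt(\Om)\leq\wt(\Om^\circledast)$, with equality forcing $\gamma(\Om)=1$ and therefore $\Om=\Om^\circledast$. I do not anticipate a genuine obstacle here, since all the analytic content is packaged in Theorem \ref{estimate}; the only point requiring mild care is the verification that regular polygons indeed lie in $\C_o$ — that is, that a regular polygon is a tangential body to its inscribed disk — which is geometrically transparent, and the confirmation that the stated isoperimetric inequality for polygons (Proposition \ref{prop:isop}) yields strict monotonicity away from the regular case. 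Everything else is the elementary algebraic manipulation of a scale-invariant quotient at fixed area.
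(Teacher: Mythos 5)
Your proposal is correct and is precisely the deduction the paper intends: it states Corollary \ref{cor1} as ``straightforward to deduce from Theorem \ref{estimate}'', namely by applying the upper bound $\wt(\Om)\le\frac{2}{q+2}|\Om|^{q+1}/|\partial\Om|^q$ to $\Om$, the equality case for $\Om^\circledast\in\P\cap\C_o$, and the equal-area normalization, exactly as you do. The uniqueness argument via $\gamma(\Om)=1\Leftrightarrow\Om=\Om^\circledast$ (the equality case of Proposition \ref{prop:isop}) is likewise the intended one.
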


As a consequence, using \eqref{proven}, we obtain some information on the shape
optimization problem (\ref{probl}):

\begin{cor}\label{cor2} Let $\displaystyle\Gamma_{N,p}:=\Big(\frac{\wt(\Omega^\circledast)}{\t(\Omega^\circledast)}\Big)^{1/q}\frac{2}{(q+1)^{1/q}}$.
Then,
$$\forall \, \Omega \in \P _N\, , \quad \t (\Omega) <\Gamma_{N, p} ^q \gamma(\Omega) ^ {-q} \t (\Omega ^ \circledast) \ .$$
In particular, the $p$-torsion of the regular $N$-gon is larger than
the $p$-torsion of any polygon in $\P_N$ having the same area and an
asymmetry measure larger than the threshold $\Gamma_{N,p}$:
\begin{equation}\label{PSconjnonweb'}
\forall \, \Omega \in \P _N\, , \quad\, \gamma(\Omega)\geq\Gamma_{N,p}\, \Rightarrow\, \t(\Omega)<\t(\Omega^\circledast)\ .
\end{equation}
\end{cor}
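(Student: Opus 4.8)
The plan is to derive the statement by chaining together the two comparison inequalities already established, so that no new estimate is required. The first ingredient is the left inequality in \eq{proven}, valid for every convex domain and in particular for $\Om\in\P_N\subset\C$, which bounds $\t(\Om)$ from above by a multiple of $\wt(\Om)$. The second ingredient is the refined isoperimetric inequality of Corollary \ref{cor1}, which bounds $\wt(\Om)$ by $\gamma(\Om)^{-q}\wt(\Om^\circledast)$. The constant $\Gamma_{N,p}$ is defined exactly so as to absorb the product of the numerical factors arising from these two steps.

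First I would rewrite the left inequality in \eq{proven} as
$$\t(\Om)<\frac{2^q}{q+1}\,\wt(\Om)\,.$$
Applying Corollary \ref{cor1} (legitimate since $\Om\in\P$) and then factoring $\wt(\Om^\circledast)=\tfrac{\wt(\Om^\circledast)}{\t(\Om^\circledast)}\,\t(\Om^\circledast)$, I would obtain
$$\t(\Om)<\frac{2^q}{q+1}\,\frac{\wt(\Om^\circledast)}{\t(\Om^\circledast)}\,\gamma(\Om)^{-q}\,\t(\Om^\circledast)\,.$$
It then remains only to recognize, straight from the definition, that $\Gamma_{N,p}^q=\frac{2^q}{q+1}\,\frac{\wt(\Om^\circledast)}{\t(\Om^\circledast)}$, the two exponents $1/q$ in $\Gamma_{N,p}$ disappearing upon raising to the power $q$; this yields the asserted bound $\t(\Om)<\Gamma_{N,p}^q\,\gamma(\Om)^{-q}\,\t(\Om^\circledast)$.

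For the final implication I would invoke the strict monotonicity of $x\mapsto x^{-q}$: if $\gamma(\Om)\geq\Gamma_{N,p}$ then $\Gamma_{N,p}^q\gamma(\Om)^{-q}=\bigl(\Gamma_{N,p}/\gamma(\Om)\bigr)^q\leq1$, and substituting into the previous inequality gives $\t(\Om)<\t(\Om^\circledast)$, the strictness being inherited from the strict inequality in \eq{proven}. I do not expect any genuine obstacle here: the whole mathematical substance sits in Theorem \ref{estimate} (hence Corollary \ref{cor1}) together with the bound \eq{proven} quoted from \cite{cfg1}, and the proof of the corollary is purely a matter of bookkeeping the constants and the sense of the inequalities. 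One may further note that, since $\wt(\Om^\circledast)/\t(\Om^\circledast)\leq1$ by \eq{proven}, one has $\Gamma_{N,p}\leq\bigl(2^q/(q+1)\bigr)^{1/q}$, which tends to $1$ as $p\to+\infty$; this accounts for the limiting claim made in the Introduction.
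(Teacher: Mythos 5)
Your proposal is correct and is exactly the argument the paper intends: the corollary is stated as an immediate consequence of chaining the strict left inequality in \eqref{proven} with Corollary \ref{cor1} and the identity $\Gamma_{N,p}^q=\frac{2^q}{q+1}\,\frac{\wt(\Omega^\circledast)}{\t(\Omega^\circledast)}$. The bookkeeping of constants and the strictness are handled correctly.
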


Some comments on Corollary \ref{cor2} are gathered in the next remark.

\begin{rk}
(i) Using again (\ref{proven}) we infer
$$1 <\Gamma_{N,p} < \frac{2}{(q+1) ^ {1/q}}<2\quad\forall N,p\, ,\qquad\lim_{p\to +\infty}\Gamma_{N,p}=1\ .$$
Hence, asymptotically with respect to $p$, the condition
$\gamma(\Omega)\geq\Gamma_{N,p}$ appearing in (\ref{PSconjnonweb'})
becomes not restrictive. Moreover, if $p=2$, we have
$\Gamma_{N,2}\le2/\sqrt{3}\approx1.15$  and the dependence on $N$ of
$\Gamma_{N,2}$ can be enlightened by using the numerical values
given in \cite{cg1}:
\begin{center}
\begin{tabular}{|c|c|c|c|c|c|c|c|c|c|}
\hline
$N$ & 3 & 4 & 5 & 6 & 7 & 8 & 9 & 10 & 20  \\
\hline $\Gamma_{N,2}\approx$ & 1.054 & 1.089 & 1.108 & 1.121 & 1.129
& 1.135 & 1.138 &
1.141 & 1.149 \\
\hline
\end{tabular}
\end{center}

(ii) Though the validity of (\ref{PSconj}) is known for triangles, in order to give an idea of the efficiency of
Corollary \ref{cor2}, consider the case $N=3$ and $p=2$. The equilateral triangle
$$T^\circledast:=\left\{(x,y)\in\R^2;\, y>0\, ,\ -\frac{1}{2}+\frac{y}{\sqrt{3}}<x<\frac{1}{2}-\frac{y}{\sqrt{3}}\right\}$$
satisfies $|T^\circledast|=\frac{\sqrt{3}}{4}$ and $|\partial T^\circledast|=3$. The solution to \eqref{euler} is explicitly given by
$$u(x,y)=\frac{\sqrt{3}}{8}\left(y-\frac{4}{\sqrt{3}}y^2+\frac{4}{3}y^3-4x^2y\right)$$
so that $\tau_2(T^\circledast)=\sqrt{3}/640$. Moreover, by \eqref{eq:energycirc} below we find $w_{2}(T^\circledast)=\sqrt{3}/768$ and, in turn,
that $\Gamma_{3,2}=\sqrt{10}/3\approx1.054$.\par
Consider now the isosceles triangles $T_k$ having the basis of length $k>0$ and the two equal sides of length
$$\ell_k=\sqrt{\frac{3}{4k^2}+\frac{k^2}{4}}\quad\mbox{so that}\quad|\partial T_k|=k+\sqrt{\frac{3}{k^2}+k^2}\quad\mbox{and}\quad|T_k|
=\frac{\sqrt{3}}{4}=|T^\circledast|\, ,$$
(notice that $T_{1}=T^\circledast$). Therefore,
$$\gamma(T_k)=\frac{k+\sqrt{\frac{3}{k^2}+k^2}}{3}$$
and $\gamma(T_k)\ge\Gamma_{3,2}$ if and only if $2\sqrt{10}\,
k^3-10\, k^2+3\ge0$, which approximatively corresponds to $k \not
\in (0.760, 1.301)$.\end{rk}

We conclude this section with a variant of Theorems \ref{estimate2} and \ref{estimate}.

\begin{teo}\label{new} For every $p \in (1, + \infty)$, it holds
\begin{eqnarray}
& \displaystyle{\forall \Om\in\C,\quad
\frac{1}{(q+2)2^{q-1}}\leq\frac{\t(\Om)}{R_{\Om}^{q}|\Om|}<\frac{2^q}{(q+1)^2}}&
\label{new2}
\\ \noalign{\smallskip}
& \displaystyle{\forall \Om\in\C,\quad
\frac{1}{(q+2)2^{q-1}}\leq\frac{\wt(\Om)}{R_{\Om}^{q}|\Om|}<\frac{1}{q+1}}\, .& \label{new1}
\end{eqnarray}
Moreover,\par
$\bullet$ the left inequality in $(\ref{new2})$ holds with equality sign for balls;\par
$\bullet$ the left inequality in $(\ref{new1})$ holds with equality sign for $\Omega \in \C_o$;\par
$\bullet$ the right inequality in $(\ref{new1})$ holds asymptotically with equality sign for a sequence of thinning rectangles.
\end{teo}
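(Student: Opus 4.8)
The plan is to work directly with the explicit formula \eqref{ice}, writing $A(t):=|\Omega_t|$ and $P(t):=|\partial\Omega_t|$ for the area and perimeter of the inner parallel sets, so that $\wt(\Om)=\int_0^{R_\Om}A(t)^q\,P(t)^{1-q}\,dt$. I would first establish the bounds \eqref{new1} for $\wt$ and then deduce \eqref{new2} for $\t$ by combining them with \eqref{proven}. The geometric input I would use for the convex sets $\Omega_t$ is of the type collected in Section \ref{geometric}: the coarea identity $P(t)=-A'(t)$ together with the monotonicity of $t\mapsto P(t)$ (equivalently, convexity of $A$); the tangential inequality $A(t)\ge\frac12(R_\Om-t)P(t)$, which is an equality exactly on $\C_o$; and the scaling inclusion $(1-t/R_\Om)\,\Om\subseteq\Omega_t$ (a consequence of $B(0,R_\Om)\subseteq\Om$ and convexity), whence $A(t)\ge(1-t/R_\Om)^2|\Om|$.

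For the upper bound in \eqref{new1} I would combine two consequences of the convexity of $A$. Since $P$ is non-increasing and $A(t)=\int_t^{R_\Om}P(s)\,ds$, one gets $A(t)\le(R_\Om-t)P(t)$, so $A(t)^q P(t)^{1-q}=A(t)\big(A(t)/P(t)\big)^{q-1}\le A(t)(R_\Om-t)^{q-1}$; and the chord bound for the convex function $A$ with $A(R_\Om)=0$ gives $A(t)\le(|\Om|/R_\Om)(R_\Om-t)$. Multiplying the two and using $\int_0^{R_\Om}(R_\Om-t)^q\,dt=R_\Om^{q+1}/(q+1)$ yields $\wt(\Om)\le R_\Om^q|\Om|/(q+1)$, the inequality being strict since $A$ is not linear for a genuine convex body (equality is only approached along thinning rectangles, where $P$ is asymptotically constant on $[0,R_\Om]$).

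For the lower bound in \eqref{new1} I would use the tangential inequality in the form $P(t)\le 2A(t)/(R_\Om-t)$, which gives $A(t)^q P(t)^{1-q}\ge 2^{1-q}A(t)(R_\Om-t)^{q-1}$, and then insert the scaling bound $A(t)\ge(R_\Om-t)^2|\Om|/R_\Om^2$. As $(R_\Om-t)^2(R_\Om-t)^{q-1}=(R_\Om-t)^{q+1}$ integrates to $R_\Om^{q+2}/(q+2)$, this produces $\wt(\Om)\ge R_\Om^q|\Om|/\big(2^{q-1}(q+2)\big)$, and both inequalities employed are simultaneously equalities precisely on $\C_o$, yielding the stated equality case. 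The bounds \eqref{new2} then follow at once: \eqref{proven} gives $\wt(\Om)\le\t(\Om)<\tfrac{2^q}{q+1}\wt(\Om)$, so the lower bound in \eqref{new1} gives $\t(\Om)\ge\wt(\Om)\ge R_\Om^q|\Om|/\big(2^{q-1}(q+2)\big)$ — an equality only when $\t=\wt$, i.e.\ for balls — while the upper bound in \eqref{new1} gives $\t(\Om)<\tfrac{2^q}{q+1}\cdot R_\Om^q|\Om|/(q+1)=\tfrac{2^q}{(q+1)^2}R_\Om^q|\Om|$.

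The step I expect to be the main obstacle is the lower bound, and in particular the recognition that one cannot simply combine Theorem \ref{estimate} with the elementary estimates $\frac12 R_\Om|\partial\Om|\le|\Om|\le R_\Om|\partial\Om|$: the two factors entering $\t/(R_\Om^q|\Om|)$ are anti-correlated, since the isoperimetric ratio is extremal on $\C_o$ exactly where $|\Om|/(R_\Om|\partial\Om|)$ is minimal, so such a product estimate loses the sharp constant. This forces the argument through \eqref{ice} with pointwise control of $A(t)$ and $P(t)$, and the delicate point is to supply a lower bound on $A(t)$ — here the scaling inclusion $(1-t/R_\Om)\Om\subseteq\Omega_t$ — that is tight on all of $\C_o$ simultaneously with the tangential inequality.
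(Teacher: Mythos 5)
Your argument is correct, but it takes a genuinely different route from the paper. The paper proves \eqref{new1} with the same machinery as Theorem \ref{estimate}: a first-variation formula for $\Om\mapsto\wt(\Om)/(R_\Om^q|\Om|)$ along inner parallel sets (its \eqref{eq:newparallel}), a monotonicity argument pushing any putative polygonal counterexample to a polygonal stadium, an explicit computation of the functional on stadiums, an elementary calculus inequality in the variable $y=1/x$, and finally a density argument with a separate careful step to make the right inequality strict. You instead bound the integrand of \eqref{ice} pointwise: for the upper bound, the monotonicity of $t\mapsto|\partial\Om_t|$ gives $|\Om_t|\le(R_\Om-t)|\partial\Om_t|$ and convexity of $t\mapsto|\Om_t|$ gives the chord bound; for the lower bound, the classical inequality $|\partial K|\le 2|K|/R_K$ applied to $K=\Om_t$ (the very inequality the paper quotes in its Step 2) together with the inclusion $(1-t/R_\Om)\Om\subseteq\overline{\Om_t}$. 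All four geometric facts are standard for convex bodies and correctly applied, the constants come out right since $\int_0^{R}(R-t)^q\,dt=R^{q+1}/(q+1)$ and $\int_0^{R}(R-t)^{q+1}\,dt=R^{q+2}/(q+2)$, the equality and strictness discussion is sound (equality in the lower bound at $t=0$ forces $|\partial\Om|=2|\Om|/R_\Om$, which characterizes $\C_o$, while strictness in the upper bound follows because $t\mapsto|\Om_t|$ cannot be affine in view of \eqref{macrosteiner}), and the passage from \eqref{new1} to \eqref{new2} via \eqref{proven} is identical to the paper's. What your approach buys is brevity and transparency: no polygonal approximation, no stadium reduction, no limiting argument for strictness, and the extremals are visible at a glance. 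What the paper's approach buys is uniformity: the same parallel-set variation scheme also handles Theorem \ref{estimate}, where the perimeter normalization makes your pointwise bounds go the wrong way, and the intermediate monotonicity and stadium-reduction statements have independent interest. Your closing remark --- that one cannot recover the sharp lower constant by combining Theorem \ref{estimate} with $\tfrac12 R_\Om|\partial\Om|\le|\Om|\le R_\Om|\partial\Om|$ because the two factors are extremal on the same class $\C_o$ --- is accurate and is exactly why a separate argument is required here.
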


The right inequality in (\ref{new2}) {\it is not} sharp. In fact, for $p=2$, one has the sharp inequalities
$$\forall \Om\in\C,\quad \frac{1}{8}\leq\frac{\tau _2(\Om)}{R_{\Om}^{2}|\Om|} \leq \frac{1}{3}\, ,$$
see \cite[p.\ 100]{posz} for the left one, and \cite{makai} for the right one.

Using the isoperimetric inequalities (\ref{new2}) and (\ref{new1}), one can also derive statements similar to Corollaries
\ref{cor1} and \ref{cor2}, where $\gamma(\Om)$ is replaced by another ``asymmetry measure'' given by
$$\widetilde \gamma (\Omega) = \frac{R_{\Omega^\circledast }}{ R_{\Omega}} \ .$$

\section{Geometric preliminaries}\label{geometric}

In this section we present some useful geometric properties of
convex polygons, which will be exploited to prove Theorem
\ref{estimate}. First, we recall an improved form of the
isoperimetric inequality in the class $\P$, whose proof can be found
for instance in \cite[Theorem 2]{cfg}. For any $\Omega \in \P$, we set
\begin{equation}\label{COmega}
C_{\Om}:=\sum_{i}\cotan \frac{\theta_{i}}{2}\, ,\, \ \hbox{ being } \theta_{i}
\hbox{ the inner angles of } \Omega\ .
\end{equation}

\begin{prop}\label{prop:isop}For every $\Om\in\P$, it holds
\begin{equation}\label{eq:iso}
|\Om|\leq\frac{|\partial\Om|^2}{4C_{\Om}}\ ,
\end{equation}
with equality sign if and only if $\Om\in \P \cap \C _o $, namely when $\Omega$ is a circumscribed polygon.
\end{prop}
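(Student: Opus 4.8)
The plan is to work entirely with the inner parallel sets $\Omega_t=\{x\in\Om:d(x)>t\}$ and to recast \eqref{eq:iso} as a statement about the discriminant of a quadratic. Write $A(t):=|\Omega_t|$ and $L(t):=|\partial\Omega_t|$. Two facts drive the argument. First, since $|\nabla d|=1$ a.e., the coarea formula gives $A'(t)=-L(t)$, with $A(0)=|\Om|$ and $A(R_\Omega)=0$ (because $R_\Omega=\sup_\Om d$, so $\Omega_{R_\Omega}=\emptyset$). Second, for a convex polygon, translating each side inward by $t$ shortens each of the two sides meeting at a vertex of inner angle $\theta_i$ by $t\cotan(\theta_i/2)$; summing over vertices, $L$ is piecewise affine with $L(0)=|\partial\Om|$ and $L'(t)=-2\,C_{\Omega_t}$, where $C_{\Omega_t}$ denotes \eqref{COmega} evaluated on the (possibly fewer) vertices of $\Omega_t$; in particular $C_{\Omega_0}=C_\Om$. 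Introducing the parabola $P(t):=|\Om|-t\,|\partial\Om|+t^2C_\Om$, whose minimum $|\Om|-|\partial\Om|^2/(4C_\Om)$ is attained at $t^\ast=|\partial\Om|/(2C_\Om)$, the inequality \eqref{eq:iso} is exactly $\min_t P\le 0$.

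Now $A''=2C_{\Omega_t}$ and $P''=2C_\Om$ piecewise, while $A$ and $P$ agree to first order at $t=0$. Hence, provided one knows that
\[
C_{\Omega_t}\ \ge\ C_\Om\qquad\text{for all }t\in[0,R_\Omega],
\]
the function $A-P$ is $C^1$, vanishes together with its derivative at $0$, and has nonnegative (piecewise) second derivative, so $A\ge P$ on $[0,R_\Omega]$. The chain
\[
0=A(R_\Omega)\ \ge\ P(R_\Omega)\ \ge\ \min_t P(t)=|\Om|-\frac{|\partial\Om|^2}{4C_\Om}
\]
then yields \eqref{eq:iso}. The hard part will be precisely the monotonicity $C_{\Omega_t}\ge C_\Om$, which I expect to be the main obstacle. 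The point is that $t\mapsto C_{\Omega_t}$ is constant, equal to $C_\Om$, as long as no side of $\Omega_t$ collapses (the inner parallel polygon keeps the same angles), and changes only at the finitely many times when a side vanishes. At such a time two consecutive vertices of inner angles $\theta_1,\theta_2$ merge into one of inner angle $\theta_1+\theta_2-\pi$, so it suffices to verify $\cotan\!\big(\tfrac{\theta_1+\theta_2-\pi}{2}\big)\ge\cotan\tfrac{\theta_1}{2}+\cotan\tfrac{\theta_2}{2}$. Using $\cotan(x-\tfrac{\pi}{2})=-\tan x$ and $\cotan\alpha+\cotan\beta=\sin(\alpha+\beta)/(\sin\alpha\sin\beta)$ with $\alpha=\theta_1/2$, $\beta=\theta_2/2$, a short computation collapses this to $\cos(\theta_1/2)\cos(\theta_2/2)\ge 0$, which holds (strictly). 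Thus every collapse \emph{strictly} increases $C_{\Omega_t}$, and the desired monotonicity follows.

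It remains to identify the equality case. Tracing the chain, equality in \eqref{eq:iso} forces both $A\equiv P$ on $[0,R_\Omega]$ and $P(R_\Omega)=\min_t P$. Since each collapse strictly increases $C_{\Omega_t}$ and hence makes $A-P$ strictly grow thereafter, $A\equiv P$ means that no side collapses before $t=R_\Omega$; together with $P(R_\Omega)=\min_t P=0$, i.e. $R_\Omega=t^\ast$, this says that all sides reach distance $R_\Omega$ simultaneously and collapse to a single point, so $\Om$ admits an inscribed circle of radius $R_\Omega$ tangent to every side, that is $\Om\in\P\cap\C_o$. Conversely, for a circumscribed polygon one has $|\Om|=\tfrac12 R_\Omega|\partial\Om|$ and $\Omega_{R_\Omega}$ is a point, whence $P(R_\Omega)=0=\min_t P$ and equality holds, completing the argument.
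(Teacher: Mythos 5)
Your proof is correct, but be aware that the paper does not actually prove Proposition \ref{prop:isop}: it imports it from \cite[Theorem 2]{cfg}, so there is no internal argument to compare against. What you have written is a self-contained proof assembled from exactly the inner-parallel-set machinery that the paper only develops \emph{afterwards} in Proposition \ref{boh} (the Steiner formulae $|\partial\Om_t|'=-2C_{\Om_t}$, $|\Om_t|'=-|\partial\Om_t|$, and the piecewise constancy of $t\mapsto C_{\Om_t}$), plus the one genuinely new ingredient the paper never needs: that $C_{\Om_t}$ can only jump \emph{upwards} when a side collapses. Your reduction of that jump to $\cos\frac{\theta_1}{2}\cos\frac{\theta_2}{2}\ge 0$ checks out: with $\alpha=\theta_1/2$, $\beta=\theta_2/2\in(0,\pi/2)$ and $\alpha+\beta\in(\pi/2,\pi)$ one computes
\[
-\tan(\alpha+\beta)-\bigl(\cotan\alpha+\cotan\beta\bigr)=\frac{\sin(\alpha+\beta)\,\cos\alpha\cos\beta}{-\cos(\alpha+\beta)\,\sin\alpha\sin\beta}>0\,,
\]
so $A''=2C_{\Om_t}\ge 2C_\Om=P''$, and together with $A(0)=P(0)$, $A'(0)=P'(0)$ this gives $A\ge P$ on $[0,R_\Om]$ and hence $0=A(R_\Om)\ge P(R_\Om)\ge\min P=|\Om|-|\partial\Om|^2/(4C_\Om)$. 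The equality analysis is also sound: equality forces $A\equiv P$ (no collapse, which by Proposition \ref{speciali} already confines $\Om$ to $\S$) together with $R_\Om=t^*$, i.e. $L(R_\Om^-)=0$ with every side surviving, which is precisely the circumscribed case.

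Two minor points you should patch or at least mention. First, you only treat the merging of \emph{two} consecutive vertices; if $k\ge 3$ consecutive sides vanish at the same instant you need $\cotan\bigl(\frac{\sum_i\theta_i-(k-1)\pi}{2}\bigr)\ge\sum_i\cotan\frac{\theta_i}{2}$, which follows by iterating your two-vertex inequality -- the intermediate angles stay in $(0,\pi)$ because each $\theta_i<\pi$ forces $\theta_1+\theta_2-\pi>\pi-\theta_3>0$, etc. -- so this is harmless but should be said. Second, the comparison-of-second-derivatives step can be shortened by one integration: from $L(0)=|\partial\Om|$ and $L'=-2C_{\Om_t}\le-2C_\Om$ a.e.\ one gets $L(t)\le|\partial\Om|-2C_\Om t$, whence $|\Om|=\int_0^{R_\Om}L(t)\,dt\le|\partial\Om|R_\Om-C_\Om R_\Om^2\le\max_{r>0}\bigl(|\partial\Om|r-C_\Om r^2\bigr)=|\partial\Om|^2/(4C_\Om)$, with the same equality discussion. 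Either way, the argument stands.
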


Next, we recall that, denoting by $R_\Omega$ the inradius of any $\Omega \in \P$, for every $t\in[ 0 , R_\Omega]$, the {\it inner
parallel} sets of $\Omega$ are defined by
$$\Om_{t}:=\{x\in\Om\ :\  {\rm dist}(x,\partial\Om)> t\}$$
(notice in particular that $\Om _{R_\Om} = \emptyset$). Then we focus our attention on the behaviour of the map $t \mapsto
C_{\Omega_t}$ on the interval $[0, R_\Omega]$, and on the related expression of Steiner formulae. For every $\Omega \in \P$, we set

$$r_\Omega:= \sup \big \{ t \in [0, R_\Omega] \ :\ \Omega _t \hbox { has the same number of vertices as $\Omega$ } \big \}\ . $$

Clearly, if $r_\Omega<R_\Omega$, the number of vertices of $\Omega_t$ is strictly less than the number of vertices of
$\Omega$ for every $t \in[r_\Omega,R_\Omega)$.

\begin{prop}\label{boh}
For every $\Omega \in \P$ and $t\geq 0$, $\Om_{t}\in\P$ and the map $t\mapsto C_{\Om_{t}}$ is piecewise constant on $[0, R_\Omega)$.
Moreover, for every $t \in [0, r_\Omega]$, it holds
\begin{equation}\label{steiner}
|\Om_{t}|=|\Om|-|\partial\Om|t+C_{\Om}t^2 \qquad \textrm{ and }\qquad |\partial\Om_{t}|=|\partial\Om|-2C_{\Om}t.
\end{equation}
Finally, for every $t \in [0, R_\Omega]$, it holds
\begin{equation}\label{macrosteiner}
|\partial\Om_{t}|\leq |\partial\Om|-2\pi t.
\end{equation}
\end{prop}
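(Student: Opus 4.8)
The plan is to describe $\Om_t$ explicitly as an intersection of inward-shifted half-planes, read off the local Steiner formulae from an elementary vertex computation, and then derive the macroscopic bound \eqref{macrosteiner} from the inequality $C_{\Om_t}\ge\pi$.

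First I would represent $\Om=\bigcap_i\{x:\langle x,n_i\rangle\le h_i\}$, where the $n_i$ are the outer unit normals to the edges. For $x\in\Om$ one has ${\rm dist}(x,\partial\Om)=\min_i(h_i-\langle x,n_i\rangle)$, so that $\Om_t=\bigcap_i\{x:\langle x,n_i\rangle< h_i-t\}$ is again convex, and as long as it is non-empty and bounded it is a convex polygon; hence $\Om_t\in\P$. Shifting each edge inward parallel to itself leaves the normals $n_i$, and therefore all inner angles $\theta_i$, unchanged, so $C_{\Om_t}=C_\Om$ as long as no edge is lost. An edge disappears only when its image degenerates to a point, which occurs at finitely many values of $t$; between two consecutive such values $C_{\Om_t}$ is constant, which gives the piecewise constancy on $[0,R_\Om)$. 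On $[0,r_\Om]$ no edge is lost, so $C_{\Om_t}=C_\Om$ throughout.

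For the local formulae \eqref{steiner}, the key computation is at a single vertex of inner angle $\theta$: intersecting the two incident edges after shifting each inward by $t$ moves the vertex a distance $t/\sin(\theta/2)$ along the bisector and shortens each incident edge by $t\cotan(\theta/2)$. Summing the two shortenings at the endpoints of each edge, and then over all edges (so that each angle $\theta_i$ is counted twice), yields $|\partial\Om_t|=|\partial\Om|-2C_\Om t$. For the area I would decompose $\Om\setminus\Om_t$ by nearest edge: the points closest to the edge $e_i$ of length $\ell_i$ form a trapezoid of height $t$ whose parallel sides have lengths $\ell_i$ and $\ell_i-t\cotan(\theta/2)-t\cotan(\theta'/2)$, hence area $t\ell_i-\tfrac{t^2}{2}\big(\cotan(\theta/2)+\cotan(\theta'/2)\big)$; summing gives $|\Om\setminus\Om_t|=|\partial\Om|t-C_\Om t^2$, and thus the area formula. (Equivalently, one integrates the identity $\tfrac{d}{dt}|\Om_t|=-|\partial\Om_t|$.)

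For \eqref{macrosteiner} I would first record the semigroup property $(\Om_s)_t=\Om_{s+t}$, valid because ${\rm dist}(x,\partial\Om_s)={\rm dist}(x,\partial\Om)-s$ on $\Om_s$; this lets me apply the perimeter formula above on each interval of constancy, so that wherever it is differentiable $\tfrac{d}{dt}|\partial\Om_t|=-2C_{\Om_t}$. Writing the inner angles as $\theta_i=\pi-\alpha_i$ with exterior angles $\alpha_i\in(0,\pi)$ and $\sum_i\alpha_i=2\pi$, one has $\cotan(\theta_i/2)=\tan(\alpha_i/2)\ge\alpha_i/2$ since $\tan x\ge x$ on $[0,\pi/2)$, and summing gives $C_{\Om_t}\ge\pi$. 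Hence $\tfrac{d}{dt}|\partial\Om_t|\le-2\pi$; as $t\mapsto|\partial\Om_t|$ is continuous (Hausdorff continuity of $t\mapsto\Om_t$) and piecewise linear on $[0,R_\Om)$, integrating this differential inequality — that is, chaining the estimate across the finitely many breakpoints — yields $|\partial\Om_t|\le|\partial\Om|-2\pi t$, the endpoint $t=R_\Om$ following by continuity. The main obstacle I anticipate is the bookkeeping at the breakpoints: verifying that the perimeter remains continuous when an edge vanishes and that the semigroup property legitimately restarts the local formula, so that the decreasing piecewise-linear profile can be chained without losing the factor $2\pi$.
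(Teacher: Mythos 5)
Your proof is correct, and for the first two assertions it takes essentially the same route as the paper: the paper's argument is exactly the geometric picture you describe (edges translated inward, vertices sliding along the bisectors, each edge shortened by $t\cotan(\theta_i/2)$ at each endpoint), presented via a figure; your trapezoid-per-edge decomposition of $\Om\setminus\Om_t$ is an equivalent repackaging of the paper's decomposition into rectangles over the edges plus corner kites of area $t^2\cotan(\theta_i/2)$. The genuine divergence is in the proof of \eqref{macrosteiner}: the paper deduces it from $C_\Om\ge\pi$ (asserted without proof) combined with the concavity of $t\mapsto|\partial\Om_t|$ on $[0,R_\Om]$, which it quotes from Bonnesen--Fenchel, whereas you avoid that external input by chaining the exact identity $|\partial\Om_{s+t}|=|\partial\Om_s|-2C_{\Om_s}t$ across the finitely many breakpoints, using the semigroup property $(\Om_s)_t=\Om_{s+t}$, and you actually supply the missing proof of $C_{\Om_t}\ge\pi$ via $\cotan(\theta_i/2)=\tan(\alpha_i/2)\ge\alpha_i/2$ and $\sum_i\alpha_i=2\pi$. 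The bookkeeping you flag is harmless: the number of edges of $\Om_t$ is finite and non-increasing, so there are at most finitely many breakpoints, and the edge lengths (hence the perimeter) vary continuously in $t$ across them. Both arguments yield the same inequality; the paper's is shorter because it outsources concavity, while yours is self-contained, needing nothing beyond the Steiner formula itself.
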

\begin{proof}
For $t$ small enough, the sides of $\Om_{t}$ are parallel and at distance $t$ from the sides of $\Om$, and the corners of $\Om_{t}$
are located on the bisectors of the angles of $\Om$. $r_{\Omega}$ is actually the first time when two of these bisectors intersect at
a point having distance $t$ from at least two sides, see Figure \ref{parallelsets}.

\begin{figure}[ht]
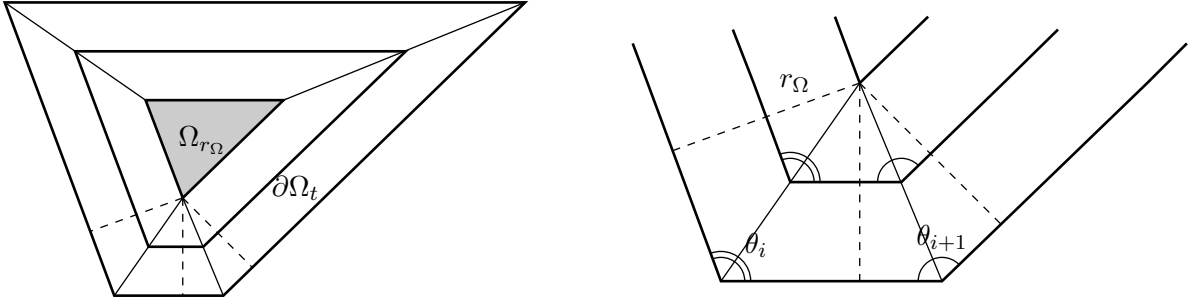

\begin{minipage}[!h]{85mm}

\figinit{0.37pt}
%cercle 1
\figpt 0:(0,0) \figpt 1:(-107,0) \figpt 21:(-182,200) \figpt
2:(-70,-100) \figpt 3:(41.5,-100) \figpt 4:(145,0) \figpt
24:(350,200) \figpt 5:(0,-100) \figpt 6:(-94,-34) \figpt 7:(71,-71)

\figpt 11:(-53.5,0) \figpt 12:(-35,-50) \figpt 13:(21,-50) \figpt
14:(72.5,0)\figpt 34:(228,150)
\figpt 31:(-110,150)

\figpt 8:(-12.5,33.5) \figpt 9:(34.5,33.5)
\figpt 44:(103,100)
\figpt 41:(-38,100)

%texte
\figpt 50:(20,30)
\figpt 51:(90,10)

%%%%%%%%%%%%%%%%%%%%%
%% fichier graphique
%%%%%%%%%%%%%%%%%%%
\psbeginfig{}

%\psset(dash=2)
\psset(fillmode=yes,color=0.8) \psline[41,0,44,41]
\psset(fillmode=no,color=0)
\psset(width=1) \psline[21,2,3,24,21] \psline[31,12,13,34,31] \psline[41,0,44,41]
%\psline[21,1]
%\psline[4,24]
%\psline[0,8] \psline[0,9]

\psset(width=0.5)

\psline[21,41] \psline[24,44]
\psline[0,2] \psline[0,3]

\psset(width=0.5) \psset(dash=8) \psline[0,5] \psline[0,6]
\psline[0,7]

\psendfig
%%%%%%%%%%%%%%%%%%%
%% writing
%%%%%%%%%%%%%%%%%%%
\figvisu{\figBoxA}{
%\bf Circumscribed polygon $\Omega$
} {
 \figwriten 50:{$\Om_{r_{\Om}}$}(15)
 \figwritee 51:{$\partial\Om_{t}$}(1)
 }
\centerline{\box\figBoxA}
\end{minipage}
\begin{minipage}[!h]{85mm}

\figinit{0.75pt}
%cercle 1
\figpt 0:(0,0) \figpt 1:(-107,0) \figpt 21:(-114.5,20) \figpt
2:(-70,-100) \figpt 3:(41.5,-100) \figpt 4:(145,0) \figpt
24:(165,20) \figpt 5:(0,-100) \figpt 6:(-94,-34) \figpt 7:(71,-71)

\figpt 8:(-12.5,33.5) \figpt 9:(34.5,33.5)

\figpt 11:(-53.5,0) \figpt 12:(-35,-50) \figpt 13:(21,-50) \figpt
14:(72.5,0) \figpt 31:(-64,27) \figpt 34:(100,27)

%%%%%%%%%%%%%%%%%%%%%
%% fichier graphique
%%%%%%%%%%%%%%%%%%%
\psbeginfig{}

%\psset(dash=2)
\psset(width=1) \psline[21,1,2,3,4,24] \psline[31,11,12,13,14,34]
\psline[0,8] \psline[0,9]

\psset(width=0.5)

\psline[0,2] \psline[0,3]

\psset(width=0.5) \psset(dash=8) \psline[0,5] \psline[0,6]
\psline[0,7]

\psset(width=0.5)

\psset(dash=1) \psarccirc 2; 12(0,110) \psarccirc 2; 15(0,110)
\psarccirc 3; 12(45,180)

\psarccirc 12; 12(0,110) \psarccirc 12; 15(0,110) \psarccirc 13;
12(45,180)

\psset(dash=8)

\psendfig
%%%%%%%%%%%%%%%%%%%
%% writing
%%%%%%%%%%%%%%%%%%%
\figvisu{\figBoxA}{
%\bf Circumscribed polygon $\Omega$
} {
 \figwritene 2:{$\theta_{i}$}(17)
 \figwriten 3:{$\theta_{i+1}$}(15)
 \figwritew 0:{$r_{\Om}$}(25)
 }
\centerline{\box\figBoxA}
\end{minipage}
\vspace{-0.8cm}\centerline{\box\figBoxA} \caption{Intersection of bisectors}
\label{parallelsets}\end{figure}

Therefore, for $t< r_{\Om}$, $\Om_{t}$ has the same angles as $\Om$ (so $C_{\Om_{t}}=C_{\Om}$ by \eq{COmega}),
and we notice that the perimeter of grey areas in Figure \ref{fig:Steiner} is
$2t\cotan(\theta_{i}/2)$, and their areas are $t^2\cotan(\theta_{i}/2)$,
which gives \eqref{steiner} (still valid for $t=r_{\Om}$ by continuity).

\begin{figure}
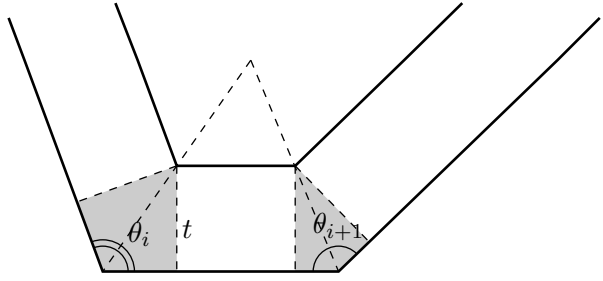

\begin{center}

\figinit{0.8pt}
%cercle 1
\figpt 0:(0,0) \figpt 1:(-107,0) \figpt 21:(-114.5,20) \figpt
2:(-70,-100) \figpt 3:(41.5,-100) \figpt 4:(145,0) \figpt
24:(165,20) \figpt 5:(-35,-100) \figpt 15:(21,-100) \figpt
6:(-82,-67) \figpt 7:(56,-86)

\figpt 11:(-53.5,0) \figpt 12:(-35,-50) \figpt 13:(21,-50) \figpt
14:(72.5,0) \figpt 31:(-64,27) \figpt 34:(100,27)

%Lettres

\figpt 20:(-30,-80)

%%%%%%%%%%%%%%%%%%%%%
%% fichier graphique
%%%%%%%%%%%%%%%%%%%
\psbeginfig{}

%\psset(dash=2)

\psset(fillmode=yes,color=0.8) \psline[12,5,2,6,12]
\psline[13,15,3,7,13] \psset(fillmode=no,color=0)

\psset(width=1) \psline[21,1,2,3,4,24] \psline[31,11,12,13,14,34]

\psset(width=0.5)

\psset(width=0.5) \psset(dash=8) \psline[12,5] \psline[13,15]
\psline[12,6] \psline[13,7] \psline[0,2] \psline[0,3]

\psset(width=0.5)

\psset(dash=1) \psarccirc 2; 12(0,110) \psarccirc 2; 15(0,110)
\psarccirc 3; 12(45,180)

\psset(dash=8)

\psendfig
%%%%%%%%%%%%%%%%%%%
%% writing
%%%%%%%%%%%%%%%%%%%
\figvisu{\figBoxA}{
%\bf Circumscribed polygon $\Omega$
} {
 \figwritene 2:{$\theta_{i}$}(17)
 \figwriten 3:{$\theta_{i+1}$}(15)
\figwritec [20]{$t$}
 }
\centerline{\box\figBoxA} \vspace{-0.5cm}
\caption{How to derive Steiner formulae}
\label{fig:Steiner}
\end{center}
\vspace{-1cm}
\end{figure}

Let us now show that the map $t\mapsto C_{\Om_{t}}$ is piecewise constant on $[0, R_\Omega)$, assuming that $r_\Om < R_\Om$. Once
$t=r_{\Om}$, $\Om_{t}$ still has sides parallel to the ones of $\Om$
but loses at least one of them. Again, $C_{\Om_{t}}$ is constant for
$t\geq r_{\Om}$ until the next value of $t$ such that another intersection of bisectors appears (we now consider
bisectors of $\Om_{r_{\Om}}$). The number of discontinuities of $t\mapsto C_{\Om_{t}}$ is finite since $\Om$ has a finite number of
sides, and therefore iterating the previous argument, we get that $t\mapsto C_{\Om_{t}}$ is piecewise constant.\par
Finally, from \eqref{COmega} we infer that $C_\Omega\ge\pi$ for any $\Omega\in\P$,
so that \eqref{macrosteiner} follows from the concavity of the map
$t\mapsto|\partial\Omega_t|$ on $[0, R_\Omega]$ (see \cite[Sections
24 and 55]{bonn}).
\end{proof}

A special role is played by polygons $\Omega\in \P$ such that
$r_\Omega=R_\Omega$, namely polygons $\Omega$ whose inner parallel
sets all have the same number of vertices as $\Omega$ itself. These are {\it polygonal stadiums}, characterized by the following

\begin{dhef}\label{def:stadium}  We call $\S$ the class of {\it polygonal stadiums}, namely polygons $P ^ \ell \in \P$
such that there exist a circumscribed polygon $P \in  \P \cap \C_o $
having two parallel sides, and a nonnegative number $\ell$ such
that, by choosing a coordinate system with origin in the center of
the disk inscribed in $P$ and the $x$-axis directed as two parallel
sides of $P$, $P ^ \ell$ can be written as
\begin{equation}\label{deco}
P^\ell:=\Big(P_{-}-\Big(\frac{\ell}{2},0\Big)\Big)\bigcup
\Big(\Big[-\frac{\ell}{2},\frac{\ell}{2}\Big]\times\Big(-R_P,R_P\Big)\Big)\bigcup
\Big(P_{+}+\Big(\frac{\ell}{2},0\Big)\Big)\ , \end{equation} where
$P _-$ (resp. $P _+$) denotes the set of points $(x, y)\in P$ with
$x<0$ (resp. $x>0$), and $R_P$ is the inradius of $P$, see Figure
\ref{fig:special}.
\end{dhef}

\begin{figure}[ht]
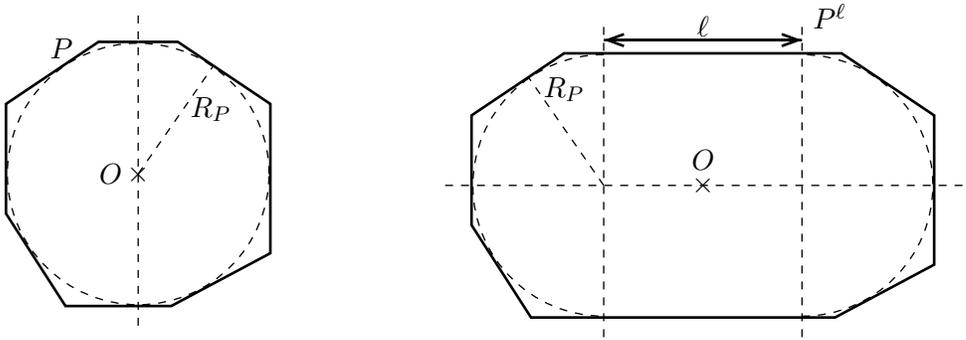

\begin{center}
\begin{minipage}[!h]{55mm}

\figinit{0.5pt}
% pt de la courbe A
%cercle 1
\figpt 0:(0,0)
%polygone gauche
\figpt 1:(0,100) \figpt 2:(-30,100) \figpt 3:(-100,53) \figpt
4:(-100,-30) \figpt 5:(-55,-100) \figpt 6:(0,-100)

%polygone droite
\figpt 7:(30,100) \figpt 8:(100,53) \figpt 9:(100,-60) \figpt
10:(25,-100) \figpt 11:(0,-100)

%Axe
\figpt 12:(0,120) \figpt 13:(0,-120)

%Rayon
\figpt 14:(57,82)

%Lettres
\figpt 15:(55,49)
\figpt 16:(-75,79)

%polygone gauche bis
\figpt 21:(0,50) \figpt 22:(-15,50) \figpt 23:(-50,26) \figpt
24:(-50,-15) \figpt 25:(-27,-50) \figpt 26:(0,-50)

%polygone droite bis
\figpt 27:(15,50) \figpt 28:(50,26) \figpt 29:(50,-30) \figpt
210:(13,-50) \figpt 211:(0,-50)

%%%%%%%%%%%%%%%%%%%%%
%% fichier graphique
%%%%%%%%%%%%%%%%%%%
\psbeginfig{}

%\psset(dash=2)
\psset(width=1) \psline[1,2,3,4,5,6] \psline[1,7,8,9,10,11]
\psset(width=0.5)
%\psline[21,22,23,24,25,26]
%\psline[21,27,28,29,210,211]
\psset(dash=8) \psarccirc 0; 99(0,360)
%\psarccirc 0; 50(0,360)
\psline[12,13] \psline[0,14]

\psendfig
%%%%%%%%%%%%%%%%%%%
%% writing
%%%%%%%%%%%%%%%%%%%
\figvisu{\figBoxA}{
%\bf Circumscribed polygon $\Omega$
} { \figwritec[15]{$R_{P}$}
\figwritene 16:{$P$}(12)
\figwritec[0]{$\times$}
\figwritew 0:{$O$}(12)
}
\centerline{\box\figBoxA}
\end{minipage}
\begin{minipage}[!h]{95mm}
\figinit{0.5pt}

% pt de la courbe A
%cercle 1
\figpt 0:(0,0)
\figpt 100:(75,0)
%polygone gauche
\figpt 1:(0,100) \figpt 2:(-30,100) \figpt 3:(-100,53) \figpt
4:(-100,-30) \figpt 5:(-55,-100) \figpt 6:(0,-100)

%polygone droite
\figpt 7:(180,100) \figpt 8:(250,53) \figpt 9:(250,-60) \figpt
10:(175,-100) \figpt 11:(150,-100)

%Axe
\figpt 12:(0,120) \figpt 13:(0,-120)
\figpt 21:(-120,0) \figpt 22:(280,0)

\figpt 212:(150,120) \figpt 213:(150,-120)

%Rayon
\figpt 14:(-57,82)

%Lettres
\figpt 15:(-30,54) \figpt 18:(75,110)
\figpt 17:(-75,79)

%Fleche
\figpt 16:(0,110) \figpt 17:(150,110)

%polygone gauche bis
\figpt 20:(150,0)

%%%%%%%%%%%%%%%%%%%%%
%% fichier graphique
%%%%%%%%%%%%%%%%%%%
\psbeginfig{}

\psset(width=1) \psline[1,2,3,4,5,6] \psline[1,7,8,9,10,6]
\psarrow[17,16] \psarrow[16,17] \psset(width=0.5)
\psset(dash=8) \psarccirc 0; 99(90,270)
\psarccirc 20; 99(-90,90)
\psline[12,13]
\psline[21,22]
\psline[212,213] \psline[0,14]

\psendfig
%%%%%%%%%%%%%%%%%%%
%% writing
%%%%%%%%%%%%%%%%%%%
\figvisu{\figBoxA}{
%\bf Polygonal stadium $\Om^\ell$
} { \figwriten 15:{$R_{P}$}(10) \figwriten 18:{$\ell$}(3)
\figwritene 17:{$P^\ell$}(12)
\figwritec[100]{$\times$}
\figwriten 100:{$O$}(12)
}
\centerline{\box\figBoxA}
\end{minipage}
\end{center}
\vspace{-1cm}\caption{A circumscribed polygon $P$ and a polygonal stadium $P ^
\ell$} \label{fig:special}
\vspace{-0.3cm}
\end{figure}

\begin{prop}\label{speciali}
Let $\Om \in \P$. There holds $r_\Omega = R_\Omega$  if and only if
$\Om \in \S$.
\end{prop}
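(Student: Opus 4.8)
The plan is to reformulate the condition $r_\Omega=R_\Omega$ --- that every inner parallel set has the same number of vertices as $\Omega$ --- as a tangency condition between the sides of $\Omega$ and the family of maximal inscribed disks, and then to read off the stadium structure directly. The starting remark is that, by Proposition \ref{boh}, when $r_\Omega=R_\Omega$ the map $t\mapsto C_{\Om_t}$ is not merely piecewise constant but \emph{constant}, equal to $C_\Om$, on the whole interval $[0,R_\Om)$; hence the first Steiner formula in \eqref{steiner} is valid up to $t=R_\Om$, and in particular $|\partial\Om_t|=|\partial\Om|-2C_\Om t$ for every $t\in[0,R_\Om]$. I will use the quantity $L_\infty:=|\partial\Om|-2C_\Om R_\Om\ge 0$ as the bookkeeping device that distinguishes the two cases below.

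First I would dispatch the easy implication $\Om=P^\ell\in\S\Rightarrow r_\Om=R_\Om$. Stretching leaves the distance $2R_P$ between the two parallel sides of $P$ unchanged, so $R_{P^\ell}=R_P$. For $t\in[0,R_P)$ the offset of $P^\ell$ decomposes exactly along \eqref{deco}: in the central block the nearest part of $\partial P^\ell$ is one of the two horizontal sides, so the offsets of these two sides are the nonempty segments $y=\pm(R_P-t)$ joining the two caps; in each cap the boundary of $\Om_t$ coincides with the corresponding side of the inner parallel set $P_t$. Since $P\in\P\cap\C_o$ is circumscribed, $P_t$ is tangential to the concentric disk of radius $R_P-t$ and therefore retains all its vertices and its two parallel sides for every $t<R_P$. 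Thus no edge of $(P^\ell)_t$ vanishes before $t=R_P=R_\Om$, which is exactly $r_\Om=R_\Om$.

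For the converse I would assume $r_\Om=R_\Om$ and set $K:=\{x\in\overline\Om: d(x)=R_\Om\}$, the set of incenters. As $d$ is concave on the convex set $\Om$, $K$ is convex, and lying on the medial axis of a polygon it is either a single point or a segment. The crucial observation is that, since no edge of $\Om_t$ disappears for $t<R_\Om$, \emph{every} side $s_i$ of $\Om$ must persist as a side of $\Om_t$ up to $t\to R_\Om$; as $\Om_t$ collapses onto $K$, the line carrying $s_i$ translated inward by $R_\Om$ must then meet $\overline K$, i.e.\ $s_i$ is at distance exactly $R_\Om$ from $K$ and hence tangent to the maximal inscribed disks centred at the points of $K$. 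Were some side not tangent, its offset line would remain at positive distance from $K$ while the two bisectors adjacent to it close in, forcing the corresponding edge of $\Om_t$ to vanish at some $t<R_\Om$ --- contradicting $r_\Om=R_\Om$. This tangency criterion is the heart of the argument.

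It then remains to convert the criterion into the decomposition \eqref{deco}. If $K=\{O\}$ is a point, every side is at distance $R_\Om$ from $O$, so $\Om$ is tangential to $B(O,R_\Om)$, i.e.\ $\Om\in\P\cap\C_o$; this is the case $\ell=0$ (and here $L_\infty=0$). If $K=I$ is a nondegenerate segment, I choose coordinates with $I=[-\ell/2,\ell/2]\times\{0\}$, where $\ell=|I|=\tfrac12 L_\infty>0$ by the Steiner formula. Rolling the disk $B(c,R_\Om)$ with $c\in I$ forces two sides of $\Om$ to be tangent to all these disks, hence to lie on the lines $y=\pm R_\Om$: these are the two parallel sides. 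Each remaining side is tangent to one of the two extreme disks $B(\pm(\ell/2,0),R_\Om)$, hence belongs to one of the two caps; translating the half $\{x>0\}$ by $(-\ell/2,0)$ and the half $\{x<0\}$ by $(\ell/2,0)$ deletes the central rectangle and glues the caps into a polygon $P$ all of whose sides are tangent to $B(O,R_\Om)$, so $P\in\P\cap\C_o$ carries the two parallel sides and $\Om=P^\ell$. I expect the genuine obstacle to lie precisely in this last step together with the no-vanishing-edge claim of the previous paragraph: one must track the endpoints of each edge along the angle bisectors and verify both that a non-tangent side forces two adjacent bisectors to meet strictly before $R_\Om$, and that the un-stretching map produces an honestly convex tangential polygon with exactly two parallel sides. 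This combinatorial bisector bookkeeping, rather than any analytic estimate, is where the real work is.
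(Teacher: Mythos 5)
Your argument is correct and follows essentially the same route as the paper's proof: the forward implication via the structure of the offsets of $P^\ell$, and the converse by splitting on whether the set $K=\{d=R_\Omega\}$ is a point or a segment, showing every side is tangent to the inscribed disks centred on $K$, and un-stretching to recover a circumscribed $P$ with $\Omega=P^\ell$. Your explicit justification of the tangency criterion (a non-tangent side would lose its edge at some $t<R_\Omega$) is a detail the paper leaves implicit, but it is the same proof.
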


\begin{proof}  We use the same notation as in Definition
\ref{def:stadium}. Assume that $\Om= P ^ \ell \in \S$. Then the
bisectors of the angles of $\Om$ intersect either at
$(-\frac{\ell}{2},0)$ or at $(\frac{\ell}{2},0)$, which are at
distance $R_{\Om}$ from the boundary, see Figure \ref{fig:special2}.
In particular, if $\Om$ is circumscribed to a disk, namely if $\ell
= 0$, then the bisectors of the angles of $\Om$ all intersect at the
center of the disk. Therefore, $\Om_{t}$ has the same number of
sides as $\Om$ if $t<R_{\Om}$.

\begin{figure}[ht]
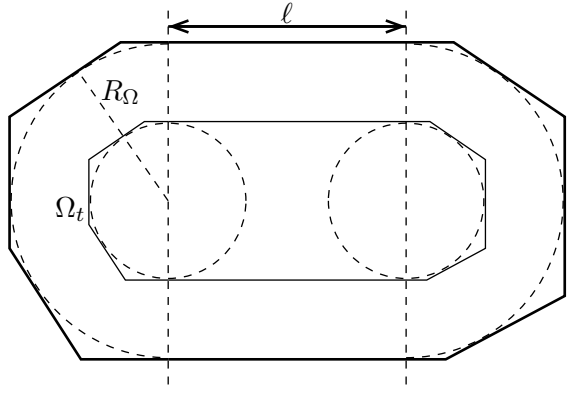

\begin{center}

\begin{minipage}[!h]{95mm}
\figinit{0.6pt}

% pt de la courbe A
%cercle 1
\figpt 0:(0,0)
%polygone gauche
\figpt 1:(0,100) \figpt 2:(-30,100) \figpt 3:(-100,53) \figpt
4:(-100,-30) \figpt 5:(-55,-100) \figpt 6:(0,-100)

%polygone droite
\figpt 7:(180,100) \figpt 8:(250,53) \figpt 9:(250,-60) \figpt
10:(175,-100) \figpt 11:(150,-100)

%Axe
\figpt 12:(0,120) \figpt 13:(0,-120)

\figpt 212:(150,120) \figpt 213:(150,-120)

%Rayon
\figpt 14:(-57,82)

%Lettres
\figpt 15:(-30,54) \figpt 18:(75,110)

%Fleche
\figpt 16:(0,110) \figpt 17:(150,110)

%polygone gauche bis
\figpt 20:(150,0) \figpt 21:(0,50) \figpt 22:(-15,50) \figpt
23:(-50,26) \figpt 24:(-50,-15) \figpt 25:(-27,-50) \figpt
26:(0,-50)

%polygone droite bis
\figpt 27:(165,50) \figpt 28:(200,26) \figpt 29:(200,-30) \figpt
210:(163,-50) \figpt 211:(150,-50)

%%%%%%%%%%%%%%%%%%%%%
%% fichier graphique
%%%%%%%%%%%%%%%%%%%
\psbeginfig{}

%\psset(dash=2)
\psset(width=1) \psline[1,2,3,4,5,6] \psline[1,7,8,9,10,6]
\psarrow[17,16] \psarrow[16,17] \psset(width=0.5)
\psline[21,22,23,24,25,26] \psline[21,27,28,29,210,26]
\psset(dash=8) \psarccirc 0; 99(90,270) \psarccirc 20; 99(-90,90)
\psarccirc 20; 49(-180,180) \psarccirc 0; 49(0,360) \psline[12,13]
\psline[212,213] \psline[0,14]

\psendfig
%%%%%%%%%%%%%%%%%%%
%% writing
%%%%%%%%%%%%%%%%%%%
\figvisu{\figBoxA}{
%\bf Polygonal stadium $\Om^\ell$
} { \figwriten 15:{$R_{\Om}$}(7) \figwriten 18:{$\ell$}(2)
\figwritenw 24:{$\Om_{t}$}(3) } \centerline{\box\figBoxA}
\end{minipage}
\end{center}
\vspace{-1cm}\caption{Parallel sets of a polygonal stadium $P ^
\ell$} \label{fig:special2}
\end{figure}

Conversely, assume that $R_\Om=r_\Om$. The set $\{x\in\Om\, :\, d(x)=R_\Om\}$ is convex with empty interior, so either it is a point, or a segment.
If it is a point, then its distance to each side is the same, and therefore the disk having this point as a center and radius $R_{\Om}$ is tangent
to every side of $\Om$, so that $\Om$ is circumscribed to a disk. If it is a segment, we choose coordinates such that this segment is
$\left[\left(-\frac{\ell}{2},0\right);\left(\frac{\ell}{2},0\right)\right]$
for some positive number $\ell$. Every point of this segment is at
distance $R_{\Om}$ from the boundary, so $\Om$ contains the
rectangle
$\left(-\frac{\ell}{2},\frac{\ell}{2}\right)\times\left(-R_\Om,R_\Om\right)$.
Considering $$P:=\Big(\Om\cap\Big\{x\leq
-\frac{\ell}{2}\Big\}+\Big(\frac{\ell}{2},0\Big)\Big)\bigcup
\Big(\Om\cap\Big\{x\geq
\frac{\ell}{2}\Big\}+\Big(-\frac{\ell}{2},0\Big)\Big),$$
we have that $P$ is circumscribed and $\Om=P^\ell$.\end{proof}

\begin{rk} Thanks to Proposition \ref{speciali}, for any polygonal stadium $P^\ell$,
the validity of the Steiner formulae (\ref{steiner}) extends
for $t$ ranging over the whole interval $[0, R_{P^\ell}]$. Moreover,
the value of the coefficients $|P^\ell|$, $|\partial P ^\ell|$ and
$C_{P ^ \ell}$ appearing therein, can be expressed only in terms of
$|P|$, $R_P$, and $\ell$ (see Section \ref{proofs}). It is enough to
use the following elementary equalities deriving from decomposition (\ref{deco})
$$ |P ^ \ell| = |P | + 2 \ell R_P\ , \qquad
|\partial P ^ \ell| = |\partial P | + 2 \ell\ , \qquad C_{P^\ell} =
C_P\ , \qquad R_{P^\ell} = R_P\ ,
$$
and the following identities holding  for every $P \in \P \cap \C_o$
\begin{equation}\label{circum}
C_{P}=\frac{|P|}{R_{P}^2} \ , \qquad |\partial P
|=\frac{2|P|}{R_{P}}\ .
\end{equation}
\end{rk}

Finally, we show that the parallel sets of any convex polygon $\Om$
are polygonal stadiums for $t$ sufficiently close to $R_{\Om}$:

\begin{prop}\label{becomestadium}
For every $\Om\in\P$, there exists $\overline t \in [0, R_\Omega)$
such that the parallel sets $\Om_{t}$ belong to $\S$ for every
$t\in\left[\overline{t},R_{\Om}\right)$.
\end{prop}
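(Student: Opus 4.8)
The plan is to reduce the statement, via Proposition \ref{speciali}, to showing that $r_{\Om_t}=R_{\Om_t}$ for every $t$ sufficiently close to $R_\Om$. By Proposition \ref{boh} each parallel set $\Om_t$ is again a convex polygon, and as $t$ grows the number of its vertices can only decrease (a side may shrink to a point, but no new side appears). Being an integer-valued, non-increasing function of $t$ bounded by the initial number of sides, this vertex count takes finitely many values and has finitely many jump times in $[0,R_\Om)$. I would denote by $t_*\in[0,R_\Om)$ the largest such jump time, with the convention $t_*:=0$ when no jump occurs (i.e. when $\Om$ is itself a stadium). Then on the whole tail interval $(t_*,R_\Om)$ the number of vertices of $\Om_\tau$ is constant, and I would fix any $\overline t\in(t_*,R_\Om)$, claiming that $\Om_t\in\S$ for all $t\in[\overline t,R_\Om)$.

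The key geometric tool is the additivity (semigroup property) of inner parallel sets of convex bodies, namely
$$ (\Om_t)_s=\Om_{t+s} \qquad\text{and}\qquad R_{\Om_t}=R_\Om-t . $$
Both identities I would deduce from the relation $d(x,\partial\Om_t)=d(x,\partial\Om)-t$, valid for every $x\in\Om_t$. The inequality $d(x,\partial\Om_t)\ge d(x,\partial\Om)-t$ follows from the fact that $d(\cdot,\partial\Om)$ is $1$-Lipschitz, so the open disk of radius $d(x,\partial\Om)-t$ about $x$ stays inside $\Om_t$; the reverse inequality follows by moving from $x$ towards a nearest point of $\partial\Om$ until the distance to $\partial\Om$ drops to $t$, exhibiting a point of $\partial\Om_t$ at distance exactly $d(x,\partial\Om)-t$ from $x$.

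Granting this, for $t\in[\overline t,R_\Om)$ the inner parallel sets of $\Om_t$ are exactly $(\Om_t)_s=\Om_{t+s}$ with $s$ ranging in $[0,R_{\Om_t})=[0,R_\Om-t)$; equivalently they form the subfamily $\{\Om_\tau:\tau\in[t,R_\Om)\}$. Since $[t,R_\Om)\subset(t_*,R_\Om)$ lies in a single interval of constancy of the vertex count, every $\Om_\tau$ in this range has the same number of vertices as $\Om_t$. Hence no vertex of $\Om_t$ is ever lost under erosion, which means precisely $r_{\Om_t}=R_{\Om_t}$, and Proposition \ref{speciali} then gives $\Om_t\in\S$, as required.

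The step I expect to be the main obstacle is the careful justification of the semigroup property $(\Om_t)_s=\Om_{t+s}$, and of the distance identity $d(x,\partial\Om_t)=d(x,\partial\Om)-t$ underlying it, since this is exactly what transfers the global ``constant number of vertices'' information from the single polygon $\Om$ down to each individual parallel set $\Om_t$. A secondary, purely bookkeeping difficulty — the behaviour of the vertex count exactly at the jump times — is sidestepped by choosing $\overline t$ \emph{strictly} larger than the last jump time $t_*$, so that the entire interval $[\overline t,R_\Om)$ sits inside one interval of constancy.
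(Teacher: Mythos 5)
Your argument is correct and follows essentially the same route as the paper: define $\overline t$ via the last time the number of sides of $\Om_t$ drops, observe that the side count is then constant on the tail interval, and invoke Proposition \ref{speciali} (through the semigroup identity $(\Om_t)_s=\Om_{t+s}$, which the paper uses implicitly) to conclude $\Om_t\in\S$. The extra care you take in justifying $(\Om_t)_s=\Om_{t+s}$ and in choosing $\overline t$ strictly past the last jump is sound but not a different method.
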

\begin{proof}
We define $\overline{t}$ as the last time $t<R_{\Om}$ such that
$\Om$ loses a side (we may have $\overline{t}=0$). Therefore
$\forall t\in\left[\overline{t},R_{\Om}\right], \Om_{t}$ has a constant number
of sides, and so is in the class $\S$ by Proposition \ref{speciali}.
\end{proof}

\section{Proofs}\label{proofs}

\subsection{Proof of Theorem \ref{estimate}}\label{ssect:proof1}

We first prove Theorem \ref{estimate} for $\Om\in\P$, then we prove it for all $\Om\in\C$.\par\smallskip
{\it $\bullet$ Step 1: comparison with inner parallel sets.} For a
given $\Om\in\P$, we wish to compare the value of the energy
$\frac{\wt(\Om)|\partial\Om|^q}{|\Om|^{q+1}}$ with the one of its
parallel set $\Om_{\eps}$ for small $\eps$. To that aim, we use the
representation formula (\ref{ice}) for $\wt(\Omega)$, and Steiner's
formulae \eqref{steiner}. In applying them we recall that, by
Proposition \ref{boh} the map $t\mapsto C_{\Omega_t}$ is piecewise
constant for $t\in[0,R_\Omega)$, and in particular it equals
$C_\Omega$ on $[0, r_\Omega]$. Taking also into account that
$(\Om_{\eps})_{t}=\Om_{\eps+t}$, as $\eps\to0$ we have
\begin{eqnarray}
\frac{\wt(\Om_{\eps})|\partial\Om_{\eps}|^q}{|\Om_{\eps}|^{q+1}}&=&
\frac{\int_{0}^{R_{\Om}-\eps}\frac{|(\Om_{\eps})_{t}|^q}{|\partial(\Om_{\eps})_{t}|^{q-1}}dt\, |\partial\Om_{\eps}|^q}{|\Om_{\eps}|^{q+1}} \label{discuss}\\
&=&\frac{\left[\wt(\Om)-\int_{0}^{\eps}\frac{|\Om_{t}|^q}{|\partial\Om_{t}|^{q-1}}dt\right]\left[|\partial\Om|-2C_{\Om}\, \eps\right]^q}
{\left[|\Om|-|\partial\Om|\, \eps\right]^{q+1}}+o(\eps), \notag\\
&=&\frac{|\partial\Om|^q}{|\Om|^{q+1}}\left[\wt(\Om)-\frac{|\Om|^q}{|\partial\Om|^{q-1}}\eps\right]
\left[1-2q\frac{C_{\Om}}{|\partial\Om|}\eps\right]\left[1+(q+1)\frac{|\partial\Om|}{|\Om|}\eps\right]+o(\eps), \notag\\
&=&\frac{\wt(\Om)|\partial\Om|^q}{|\Om|^{q+1}}+\left[(q+1)\frac{|\partial\Om|^{q+1}}{|\Om|^{q+2}}\wt(\Om)-
\frac{|\partial\Om|}{|\Om|}-2q\frac{C_{\Om}\wt(\Om)|\partial\Om|^{q-1}}{|\Om|^{q+1}}\right]\eps+o(\eps), \notag\\ \notag
\end{eqnarray}
so that
\begin{equation}\label{comparison}
\frac{\wt(\Om_{\eps})|\partial\Om_{\eps}|^q}{|\Om_{\eps}|^{q+1}}-
\frac{\wt(\Om)|\partial\Om|^q}{|\Om|^{q+1}}=\left[(q+1)\frac{|\partial\Om|^{q+1}}{|\Om|^{q+2}}\wt(\Om)-
\frac{|\partial\Om|}{|\Om|}-2q\frac{C_{\Om}\wt(\Om)|\partial\Om|^{q-1}}{|\Om|^{q+1}}\right]\eps+o(\eps)\, .
\end{equation}
As we shall see in the next steps, formula \eqref{comparison} will
enable us to reach a contradiction if \eqref{eq:isoNp} fails.\par\smallskip\noindent
{\it $\bullet$ Step 2: if \eqref{eq:isoNp} fails for some convex polygon then it also fails
for a polygonal stadium.} Let $\Omega\in\P \setminus \S$, and assume
that \eqref{eq:isoNp} fails. We have to distinguish two cases.\par
{\em First case:} Assume that
\begin{equation}\label{contra1}
\frac{\wt(\Om)|\partial\Om|^q}{|\Om|^{q+1}}>\frac{2}{q+2}.
\end{equation}
Using the isoperimetric inequality \eqref{eq:iso} and (\ref{contra1}), one gets
$$\left[(q+1)\frac{|\partial\Om|^{q+1}}{|\Om|^{q+2}}\wt(\Om)-\frac{|\partial\Om|}{|\Om|}-
2q\frac{C_{\Om}\wt(\Om)|\partial\Om|^{q-1}}{|\Om|^{q+1}}\right]\ge
\frac{q+2}{2}\frac{|\partial\Om|}{|\Om|}\left[\frac{\wt(\Om)|\partial\Om|^q}{|\Om|^{q+1}}-\frac{2}{q+2}\right]>0\, .$$
Inserting this information into \eqref{comparison} shows that
$$\frac{\wt(\Om_{\eps})|\partial\Om_{\eps}|^q}{|\Om_{\eps}|^{q+1}}-\frac{\wt(\Om)|\partial\Om|^q}{|\Om|^{q+1}}>0$$
for sufficiently small $\eps$. In fact, more can be said. By
Proposition \ref{boh} we know that $C_{\Omega_t}=C_\Omega$ for all
$t\in[0,r_\Omega)$. By extending the above argument to all such $t$,
we obtain that, if \eqref{contra1} holds, then the map
$t\mapsto\frac{\wt(\Om_t)|\partial\Om_t|^q}{|\Om_t|^{q+1}}$ is
strictly increasing for $t\in[0,r_\Omega)$. In particular, by
\eqref{contra1},
$$
\frac{\wt(\Om_{\eps})|\partial\Om_{\eps}|^q}{|\Om_{\eps}|^{q+1}}>\frac{\wt(\Om)|\partial\Om|^q}{|\Om|^{q+1}}>\frac{2}{q+2}
\qquad \forall \eps \in (0, r_\Omega].
$$
So, if $\Om_{r_\Om} \in \S$ , we are done since it violates
\eqref{eq:isoNp}. At $t=r_\Om$ the number of sides of $\Omega_t$
varies. If $\Om_{r_\Om} \notin\S$, we repeat the previous argument
to the next interval where $C_{\Omega_t}$ remains constant. Again,
the map $t\mapsto\frac{\wt(\Om_t)|\partial\Om_t|^q}{|\Om_t|^{q+1}}$
is strictly increasing on such interval. In view of Proposition
\ref{becomestadium}, this procedure enables us to obtain some
polygonal stadium such that \eqref{contra1} holds.\par {\em Second
case:} Assume that
\begin{equation}\label{contra2}
\frac{\wt(\Om)|\partial\Om|^q}{|\Om|^{q+1}}\leq\frac{1}{q+1}.
\end{equation}
Hence,
$$\left[(q+1)\frac{|\partial\Om|^{q+1}}{|\Om|^{q+2}}\wt(\Om)-\frac{|\partial\Om|}{|\Om|}-
2q\frac{C_{\Om}\wt(\Om)|\partial\Om|^{q-1}}{|\Om|^{q+1}}\right]$$
$$=(q+1)\frac{|\partial\Om|}{|\Om|}\left[\frac{|\partial\Om|^q}{|\Om|^{q+1}}\wt(\Om)-\frac{1}{q+1}-
\frac{2q}{q+1}\frac{C_{\Om}\wt(\Om)|\partial\Om|^{q-2}}{|\Om|^q}\right]<0\,
.$$ Inserting this into \eqref{comparison} and arguing as in the
previous case, we see that the map
$t\mapsto\frac{\wt(\Om_t)|\partial\Om_t|^q}{|\Om_t|^{q+1}}$ is
strictly decreasing for $t\in[0,R_\Omega)$. In view of Proposition
\ref{becomestadium}, this proves that there exists some polygonal
stadium  such that \eqref{contra2} holds.\par\smallskip\noindent
{\it $\bullet$ Step 3: explicit computation for a polygonal stadium.} Let
$\Omega= P ^\ell\in\S$ be a polygonal stadium. We are going to
derive an explicit expression for the function
$$F(\ell):=\frac{\wt(P^{\ell})|\partial P^{\ell}|^q}{|P^{\ell}|^{q+1}}\qquad \forall \, \ell\ge0\, .$$
We point out that, in the special case $\ell = 0$, $\Omega \in \P \cap \C_o$ (namely $\Om$ is a circumscribed polygon), and
it is proven in \cite[Proposition 2]{cfg1} that
\begin{equation}\label{eq:energycirc}
\forall \Om\in\C_o,\quad\frac{\wt(\Om)|\partial\Om|^q}{|\Om|^{q+1}}=\frac{2}{q+2}.
\end{equation}
In particular, formula \eq{eq:energycirc} shows that the upper bound in \eqref{eq:isoNp} is achieved when $\Omega\in\C_o$.\par
We now show that the above formula can be suitably extended also to the case $\ell>0$. Our starting point is the representation formula (\ref{ice}).
Therein, we use the Steiner formulae \eqref{steiner}; in particular, by Propositions \ref{boh} and \ref{speciali},
we know
that $C_{\Omega_t}\equiv C_\Omega$ for every $t\in[0,R_\Omega)$.
Moreover, since $P \in \P \cap \C _o$, we can exploit identities (\ref{circum}). Setting for brevity
$$A:= | P|\ , \qquad R := R_P\ , \qquad x:= \frac{2 R \ell} {A}\ ,$$
we obtain
\begin{eqnarray}
F(\ell)&=&\frac{\left(2\frac{A}{R}+2\ell\right)^q}{\left(A+2R\ell\right)^{q+1}}
\int_{0}^R\frac{\left(A+2R\ell-2\ell t-2\frac{A}{R}t+\frac{A}{R^2}t^2\right)^q}{\left(2\ell+2\frac{A}{R}-2\frac{A}{R^2}t\right)^{q-1}}dt\,
 \notag\\
&=&\frac{(x+2)^q}{(x+1)^{q+1}}\int_0^1\frac{(1+x-xt-2t+t^2)^q}{(x+2-2t)^{q-1}}\, dt \notag\\
&=&\frac{(x+2)^q}{(x+1)^{q+1}}\int_0^1\; \frac{t^q(x+t)^q}{(x+2t)^{q-1}}\, dt\ .\label{FF}
\end{eqnarray}
Of course, taking $x=0$ in \eqref{FF} gives again
\eqref{eq:energycirc}; on the other hand, taking $x\to\infty$ gives
the asymptotic behaviour for thinning polygonal stadiums.\par\smallskip\noindent
{\it $\bullet$ Step 4}: In view of equality \eqref{FF} obtained in Step 3, the estimate
\eqref{eq:isoNp} will be proved for any polygonal stadium, provided
we show that for all $q\in(1,+\infty)$ one has
\neweq{double}
\frac{1}{q+1}<\frac{(x+2)^q}{(x+1)^{q+1}}\,
\int_0^1\frac{t^q(x+t)^q}{(x+2t)^{q-1}}\,
dt<\frac{2}{q+2}\qquad\forall x\in(0,+\infty).
\endeq
With the change of variables $t=xs$, the inequalities in \eq{double} become
$$\frac{1}{q+1}\frac{(x+1)^{q+1}}{x^{q+2}(x+2)^q}<\int_0^{1/x}\frac{s^q(1+s)^q}{(1+2s)^{q-1}}\, ds<
\frac{2}{q+2}\, \frac{(x+1)^{q+1}}{x^{q+2}(x+2)^q}\qquad\forall x\in(0,+\infty).$$
In turn, by putting $y=1/x$, the latter inequalities become
\neweq{double2}
\frac{1}{q+1}\frac{y^{q+1}(1+y)^{q+1}}{(1+2y)^q}<\int_0^y
\frac{s^q(1+s)^q}{(1+2s)^{q-1}}\, ds< \frac{2}{q+2}\,
\frac{y^{q+1}(1+y)^{q+1}}{(1+2y)^q}\qquad\forall y\in(0,+\infty).
\endeq
In order to prove the right inequality in \eq{double2}, consider the function
$$\Phi(y):=\int_0^y \frac{s^q(1+s)^q}{(1+2s)^{q-1}}\, ds-\frac{2}{q+2}\, \frac{y^{q+1}(1+y)^{q+1}}{(1+2y)^q}\qquad y\in(0,+\infty)$$
and we need to prove that $\Phi(y)<0$ for all $y>0$. This is a consequence
of the two following facts:
$$\Phi(0)=0\ ,\qquad\Phi'(y)=-\frac{q}{q+2}\frac{y^q(1+y)^q}{(1+2y)^{q+1}}<0\, .$$
In order to prove the left inequality in \eq{double2}, consider the function
$$\Psi(y):=\int_0^y \frac{s^q(1+s)^q}{(1+2s)^{q-1}}\, ds-\frac{1}{q+1}\, \frac{y^{q+1}(1+y)^{q+1}}{(1+2y)^q}\qquad y\in(0,+\infty)$$
and we need to prove that $\Psi(y)>0$ for all $y>0$. This is a consequence of the two following facts:
$$\Psi(0)=0\ ,\qquad\Psi'(y)=\frac{2q}{q+1}\frac{y^{q+1}(1+y)^{q+1}}{(1+2y)^{q+1}}>0\, .$$
Both inequalities in \eq{double2} are proved and \eq{double} follows.
\par\smallskip\noindent
We point out that, in the case $q=2$, some explicit computations
give the stronger result that the map
$$x\mapsto \frac{(x+2)^q}{(x+1)^{q+1}}\, \int_0^1\frac{t^q(x+t)^q}{(x+2t)^{q-1}}\, dt$$
is decreasing. We believe that this is true for any $q$, but we do
not have a simple proof of this property.\par\smallskip\noindent
{\it $\bullet$ Step 5: conclusion.} Let $\Omega\in\P$ and assume for
contradiction that $\Omega$ violates \eq{eq:isoNp}. Then by Step 2
we know that there exists a polygonal stadium which also violates
\eqref{eq:isoNp}. This contradicts Step 4, see \eq{double}. We have
so far proved that \eqref{eq:isoNp} holds for all $\Omega\in\P$.
By a density argument we then infer that
\begin{equation}\label{last}
\forall \Om\in\C,\quad \frac{1}{q+1}\le\frac{\wt(\Om)|\partial\Om|^q}{|\Om|^{q+1}}\leq\frac{2}{q+2}.
\end{equation}

Therefore, in order to complete the proof we need to show that the left inequality in \eqref{last} is strict.
Assume for contradiction that there exists $\Om\in\C$ such that
\begin{equation}\label{contradiction}
\frac{\wt(\Om)|\partial\Om|^q}{|\Om|^{q+1}}=\frac{1}{q+1}\, .
\end{equation}
Take any sequence $\Om^k\in\P$ such that $\Om^k\supset\Omega$ and $\Om^k\to\Omega$ in the Hausdorff topology.
Similar computations as in \eqref{discuss}, combined with \eqref{macrosteiner}, enable us to obtain
$$
\frac{\wt(\Om^k_{\eps})|\partial\Om^k_{\eps}|^q}{|\Om^k_{\eps}|^{q+1}}\le
\frac{\left[\wt(\Om^k)-\int_{0}^{\eps}\frac{|\Om^k_{t}|^q}{|\partial\Om^k_{t}|^{q-1}}dt\right]\left[|\partial\Om^k|-2\pi\, \eps\right]^q}
{\left[|\Om^k|-|\partial\Om^k|\, \eps\right]^{q+1}}$$
$$\le\frac{\wt(\Om^k)|\partial\Om^k|^q}{|\Om^k|^{q+1}}+\left[(q+1)\frac{|\partial\Om^k|^{q+1}}{|\Om^k|^{q+2}}\wt(\Om^k)-
\frac{|\partial\Om^k|}{|\Om^k|}-2q\frac{\pi\wt(\Om^k)|\partial\Om^k|^{q-1}}{|\Om^k|^{q+1}}\right]\eps+\alpha\eps^2,$$
where $\alpha$ is some positive constant, depending on $\Om$ but not on $k$. Therefore,
since $\Om^k_t\to\Omega_t$ for all $t\in[0,R_\Omega]$, we have
\begin{eqnarray*}
\frac{\wt(\Om_{\eps})|\partial\Om_{\eps}|^q}{|\Om_{\eps}|^{q+1}}-\frac{\wt(\Om)|\partial\Om|^q}{|\Om|^{q+1}} &=&
\frac{\wt(\Om^k_{\eps})|\partial\Om^k_{\eps}|^q}{|\Om^k_{\eps}|^{q+1}}-\frac{\wt(\Om^k)|\partial\Om^k|^q}{|\Om^k|^{q+1}}+o(1)\\
&\le&\left[o(1)-2q\frac{\pi\wt(\Om^k)|\partial\Om^k|^{q-1}}{|\Om^k|^{q+1}}\right]\eps+\alpha\eps^2+o(1)\\
\end{eqnarray*}
where $o(1)$ are infinitesimals (independent of $\eps$) as $k\to\infty$.
Hence, by letting $k\to\infty$ and taking $\eps$ sufficiently small, we obtain
$\frac{\wt(\Om_{\eps})|\partial\Om_{\eps}|^q}{|\Om_{\eps}|^{q+1}}<\frac{1}{q+1}$, which contradicts \eqref{last}.

\subsection{Proof of Theorem \ref{estimate2}}

The inequalities \eqref{eq:isoDp} follow directly from
\eqref{eq:isoNp} and \eqref{proven} so we just need to show that
they are sharp.\par For the right inequality, take a sequence of
thinning isosceles triangles $T_k$. Then, by Theorem \ref{estimate} we have
$$\frac{\wt(T_k)|\partial T_k|^q}{|T_k|^{q+1}}=\frac{2}{q+2}\qquad\mbox{for all }k\, .$$
On the other hand, by \cite[Proposition 3]{cfg1} and \eqref{proven}
we know that
$$\lim_{k\to\infty}\frac{\wt(T_k)}{\t(T_k)}=\frac{q+1}{2^q}$$
and therefore
$$\lim_{k\to\infty}\frac{\t(T_k)|\partial T_k|^q}{|T_k|^{q+1}}=\frac{2^{q+1}}{(q+2)(q+1)}\, .$$
For the left inequality, we seek an upper bound for $\t(\Omega)$ by
using the maximum principle. For all $\ell\in(0,+\infty)$ let
$\Omega^\ell=(-\frac{\ell}{2},\frac{\ell}{2})\times(-1,1)$ and let
$u_\ell$ be the unique solution to
$$-\Delta_p u_\ell=1\quad\mbox{in }\Omega^\ell\ ,\qquad u_\ell=0\quad\mbox{on }\partial\Omega^\ell\ .$$
Let $u_\infty(x,y)=\frac{p-1}{p}(1-|y|^{p/(p-1)})$ so that
$$-\Delta_p u_\infty=1\quad\mbox{in }\Omega^\ell\ ,\qquad u_\infty\ge0\quad\mbox{on }\partial\Omega^\ell\ .$$
By the maximum principle, we infer that $u_\infty\ge u_\ell$ in
$\Omega^\ell$ so that
$$\t(\Omega^\ell)=\int_{\Omega^\ell}u_\ell\le\int_{\Omega^\ell}u_\infty=\frac{2(p-1)}{p}\ell\int_0^1(1-y^{p/(p-1)})\, dy=\frac{2(p-1)}{2p-1}\ell=\frac{2\ell}{q+1}\, .$$
Hence,
$$1\ge\liminf_{\ell\to\infty}\frac{\wt(\Omega^\ell)}{\t(\Om^\ell)}\ge\liminf_{\ell\to\infty}\frac{(q+1)\wt(\Omega^\ell)}{2\ell}=1$$
where the last equality follows from Theorem \ref{estimate}. Combined with Theorem \ref{estimate}, this proves that
$$\lim_{\ell\to\infty}\frac{\t(\Om^\ell)|\partial\Om^\ell|^q}{|\Om^\ell|^{q+1}}=\frac{1}{q+1}\, .$$

\subsection{Proof of Theorem \ref{new}}

Since it follows closely the proof of Theorem \ref{estimate2}, we just sketch it. We first prove the counterpart of Theorem \ref{estimate}
and we follow the same steps.\par
{\it $\bullet$ Step 1.} Given $\Om\in\P$ and using $R_{\Om_{\eps}}=R_{\Om}-\eps+o(\eps)$ we prove:
\begin{equation}\label{eq:newparallel}
\frac{\wt(\Om_{\eps})}{R_{\Om_{\eps}}|\Om_{\eps}|}-
\frac{\wt(\Om)}{R_{\Om}|\Om|}=\frac{\eps}{R_{\Om}^q|\Om|}\left(w_{p}(\Om)\left[\frac{q}{R_{\Om}}+
\frac{|\partial\Om|}{|\Om|}\right]-\frac{|\Om|^{q}}{|\partial\Om|^{q-1}}\right)+o(\eps)\, .
\end{equation}
\par
{\it $\bullet$ Step 2.} We prove that, if \eqref{new1} fails for some $\Om\in\P$, then it also fails for a polygonal stadium. To that end, we estimate the sign in \eqref{eq:newparallel} with the help of the following classical geometric inequalities (see \cite{bonn})
$$\forall \Om\in\C, \;\;\frac{|\Om|}{R_{\Om}}<|\partial\Om|\leq\frac{2|\Om|}{R_{\Om}}\;.$$
\par
{\it $\bullet$ Step 3.} Again, explicit computations can be done for a polygonal stadium, and with the same notation as in the proof of Theorem \ref{estimate}, we get:
$$\frac{w_{p}(P^\ell)}{R_{P^\ell}^q|P^\ell|}=\frac{1}{x+1}\int_0^1\frac{t^q(x+t)^q}{(x+2t)^{q-1}}\,dt \qquad 
\forall P ^ \ell \in {\mathcal S}\,.$$
\par
{\it $\bullet$ Step 4.} In view of Step 3, estimate (\ref{new2}) is proved for any polygonal stadium, provided for all $q\in(1,+\infty)$ one has
\neweq{double3}
\frac{1}{(q+2)2^{q-1}}<\frac{1}{x+1}\, \int_0^1\frac{t^q(x+t)^q}{(x+2t)^{q-1}}\,
dt<\frac{1}{q+1}\qquad\forall x\in(0,+\infty).
\endeq
With the change of variables $t=xs$ and putting $y=1/x$, the inequalities in \eq{double3} become
\neweq{double4}
\frac{y^{q+2}+y^{q+1}}{(q+2)2^{q-1}}<\int_0^y\frac{s^q(1+s)^q}{(1+2s)^{q-1}}\,
ds<\frac{y^{q+2}+y^{q+1}}{q+1}\qquad\forall y\in(0,+\infty).
\endeq
Some tedious but straightforward computations show that
$$
\frac{y^{q+1}}{2^{q-1}}+\frac{q+1}{(q+2)2^{q-1}}y^q<\frac{y^q(1+y)^q}{(1+2y)^{q-1}}<\frac{q+2}{q+1}y^{q+1}+y^q
\qquad\forall y\in(0,+\infty)
$$
and \eq{double4} follows after integration over $(0,y)$.\par
{\it $\bullet$ Step 5.} The previous steps leads to \eqref{new1} for polygons and by density for convex domains. The strict right inequality in \eqref{new1} can be obtained by reproducing carefully the computations in Step 1, similarly as done in Step 5 of Section \ref{ssect:proof1}. 

Now the counterpart of Theorem \ref{estimate} is proved, and we may
use \eqref{proven} in order to get \eqref{new2} from \eqref{new1}. Balls realize equality in the left inequality of \eqref{new2} because the are at the same time circumscribed and maximal for the quotient $w_{p}/\t$.

\section{Some open problems}\label{open}

We briefly suggest here some perspectives which might be considered, in the light of our results.\par\smallskip
{\em Sharp bounds for the $p$-torsion in higher dimensions.} In higher dimensions the shape functionals $\t$ and $\wt$ can be
defined in the analogous way as for $n=2$. In \cite{cr2}, Crasta proved the following sharp bounds:
$$\forall\; \Om\textrm{ bounded convex set }\subset\R^n\,, \qquad \frac{n+1}{2n}<\frac{w_2(\Om)}{\tau_{2}(\Om)}\leq 1\ .$$
Therefore it seems natural to ask: what kind of isoperimetric inequality can be proved for $w_{p}$  and $\tau _p$ among convex sets in $\R^n$?
In this direction, let us quote an inequality proven in \cite{FK}, obtained by a strategy similar to our approach, that is by looking at
the level sets of the support function:
$$\forall\; \Om\textrm{ bounded convex set }\subset\R^n\,, \qquad
\frac{\tau_{2}(\Om)|\partial\Om|}{R_{\Om}|\Om|^2} \geq \frac{\tau_{2}(B)|\partial B|}{R_{B}|B|^2} \qquad(B\mbox{ is a ball of }\R^n\, ).$$
{\em Sharp bounds for the principal frequency.}
A notion of ``web principal frequency'' can be defined (in any space dimension) similarly as done for the web torsion, that is
$$\lambda_{1}^+(\Om):=\inf\left\{\frac{\int_{\Om}|\nabla u|^2}{\int_{\Om}u^2}\ :\  u\in \mathcal{W}_{2}(\Om)\right\}.$$
Writing the optimality condition in the space $\mathcal{W}_{2}(\Om)$, one can express $\lambda_{1}^+(\Om)$ as
$$\lambda_{1}^+(\Om)=\inf\left\{\frac{\int_{0}^{R_{\Om}}\alpha\rho'^2}{\int_{0}^{R_{\Om}}\alpha\rho^2}\ :\ \rho \in H ^ 1 (0, R_\Om)\, ,\   \rho(0)=0\right\}, \textrm{ where }\alpha(t)=|\partial\Om_{t}|.$$
It is clear that $\lambda_{1}^+ (\Om) \geq \lambda_{1}(\Om)$, with
equality sign when $\Om$ is a ball. On the other hand, the following questions can be addressed:\par
$\bullet$ Find a sharp bound from above for the ratio $\lambda_{1}^+(\Om)/\lambda_{1}(\Om)$ among bounded convex subsets of $\R^n$.\par
$\bullet$ Is it possible to apply successfully the same strategy of this paper, that is find sharp bounds for $\lambda_{1}^+(\Om)$
and then use the estimates on the ratio $\lambda_{1}^+(\Om)/\lambda_{1}(\Om)$, to deduce sharp bounds for $\lambda_{1}(\Om)$?
In particular, this approach might allow to retrieve the following known inequalities holding for any bounded convex domain
$\Omega \subset \R^2$ (see \cite{he,makai,polya}):
$$\displaystyle{\frac{\pi^2}{16}\leq \lambda_{1}(\Om)\frac{|\Om|^2}{|\partial\Om|^2}\leq \frac{\pi^2}{4}} \qquad \hbox{ and } \qquad \displaystyle{\frac{\pi^2}{4}\leq \lambda_{1}(\Om)R_{\Om}^2\leq j_{0}^2\ .}$$

\bigskip
{\bf Acknowledgments.} This work was realized thanks to the
financial support of the Italian Gruppo Nazionale per l'Analisi
Matematica, la Probabilit\`a e le loro Applicazioni,  as part of the
project ``Problemi di ottimizzazione e disuguaglianze
geometrico-funzionali'', and by the French Agence Nationale de la
Recherche, as part of the project ANR-09-BLAN-0037 ``Geometric
analysis of optimal shapes''.

\par\smallskip
\small{ Ilaria FRAGAL\`A, Politecnico di Milano, Piazza Leonardo da Vinci
32, 20133 Milano (Italy)\par\medskip Filippo GAZZOLA, Politecnico di
Milano, Piazza Leonardo da Vinci 32, 20133 Milano (Italy)
\par\medskip Jimmy LAMBOLEY, Universit\'e Paris-Dauphine,
Ceremade,  Place du Mar\'echal de Lattre de Tassigny, 75775 Paris (France)}
\vfill\eject
\end{document}